\newtheorem{theo}{Theorem}
\newtheorem{prop}[theo]{Proposition}
\newtheorem{lem}[theo]{Lemma}
\newtheorem{remark}[theo]{Remark}
\newtheorem{coro}[theo]{Corollary}
\newcommand{\bbZ}{\mathbb{Z}}
\newcommand{\set}[1]{\left\{#1\right\}}
\newcommand{\eps}{\varepsilon}
\newcommand{\lf}{\lfloor}
\newcommand{\rf}{\rfloor}
\renewcommand{\P}{\mathbb{P}}
\newcommand{\Z}{\mathbb{Z}}
\newcommand{\R}{\mathbb{R}}
\newcommand{\Lr}{\mathcal{L}}
\begin{document}

\title{Longest increasing paths with gaps}
\author{%
\textsc{Basdevant A.-L.}\footnote{Laboratoire Modal'X, UPL,  Univ. Paris Nanterre, France. email: anne.laure.basdevant@normalesup.org. Work partially supported by ANR PPPP, ANR Malin and Labex MME-DII.} \and \textsc{Gerin L.}\footnote{CMAP, Ecole Polytechnique, France. email: gerin@cmap.polytechnique.fr.  Work partially supported by ANR PPPP and ANR GRAAL.}}

\maketitle
\begin{abstract}
We consider a variant of the continuous and discrete Ulam-Hammersley problems: we study the maximal length of an increasing path through a Poisson point process (or a Bernoulli point process) with the restriction that there must be minimal gaps between abscissae and ordinates of successive points of the path.

For both cases (continuous and discrete) our approach rely on couplings with well-studied models: respectively the classical Ulam-Hammersley problem and last-passage percolation with geometric weights.
Thanks to these couplings we obtain explicit limiting shapes in both settings.
We also establish that, as in the classical Ulam-Hammersley problem, the fluctuations around the mean are given by the Tracy-Widom distribution.
\end{abstract}
 
\noindent{\bf {\textsc MSC 2010 Classification}:} 60K35, 60F15.\\
\noindent{\bf Keywords:} combinatorial probability, longest increasing subsequences, longest increasing paths, last-passage percolation, Hammersley's process, Ulam's problem, BLIP (Bernoulli Longest Increasing Paths)

\section{Introduction}

Motivated by the Ulam problem (which asks for the asymptotic behavior of the maximal length of an increasing subsequence in a uniform random permutation), 
Hammersley \cite{HammersleyHistorique} studied the problem of the maximal length $L_{(x,t)}$ of an increasing path in a Poisson process with intensity one in $(0,x)\times (0,t)$. He used subadditivity to prove the existence of a constant $\pi/2 \leq c \leq e$ such that $L_{(t,t)}/t \to c$ in probability and conjectured $c=2$.
The first probabilistic proof of $c=2$ was obtained by Aldous-Diaconis \cite{AldousDiaconis}, by exploiting the geometric construction of Hammersley.
(We refer to \cite{Romik} for a nice and modern introduction to the Ulam-Hammersley problem.)
In this article we obtain the limiting behaviour of the maximal length of an increasing path in a Poisson process, if we impose minimal \emph{gaps} between abscissae and ordinates of successive points in the path. 

Our proof uses a coupling with the original Ulam-Hammersley problem, and therefore we make a strong use of Aldous-Diaconis' result. This coupling also allows us to use the celebrated result by Baik-Deift-Johansson regarding the fluctuations of $L_{(x,t)}$ around its mean. We obtain that, with the proper rescaling, the fluctuations of our problem around the mean are also given by the Tracy-Widom distribution.

It turns out that our strategy also applies to the discrete settings: we obtain explicit asymptotic results for the length of the longest increasing path with \emph{gaps} through Bernoulli random points on the square lattice. 
We now state our results.


\subsubsection*{Continuous settings}
Let $\Xi$ be a homogenous Poisson point process in $(0,+\infty)^2$ with intensity $1$. We write $\Xi_{y,s}=0/1$ for the absence/presence of a point of $\Xi$ at $(y,s)$, and we say that $(y,s)\in\Xi$ if $\Xi_{y,s}=1$.
Let ${\bf h}=(h_1,h_2)$ be a pair of non  negative real numbers. 
We introduce the strict order on $(0,+\infty)^2$ defined by
$$
(y,s) \stackrel{{\bf h}}{\prec} (y',s') \text{ if and only if }
\begin{cases}
y+h_1\leq y',\\
s+h_2\leq s'.
\end{cases}
$$
For $x,t>0$, we consider the random variable $L^{\bf h}_{(x,t)}$ given by the length of the longest increasing path in $\Xi \cap[0,x]\times[0,t]$ with horizontal \emph{gaps} $h_1$ and vertical \emph{gaps} $h_2$. Namely,
\begin{multline*}
L^{\bf h}_{(x,t)}=
\max\bigg\{L; \hbox{ there are }(y_1,s_1),\dots,(y_L,s_L)\in \Xi \text{ such that }\\
0<y_1 < \dots < y_L<  x, \quad 0< s_1 < \dots < s_L < t,\quad 
(y_1,s_1) \stackrel{{\bf h}}{\prec} (y_2,s_2)  \stackrel{{\bf h}}{\prec} \dots  \stackrel{{\bf h}}{\prec} (y_L,s_L).
\bigg\}
\end{multline*}
\begin{figure}
\begin{center}
\includegraphics[width=11cm]{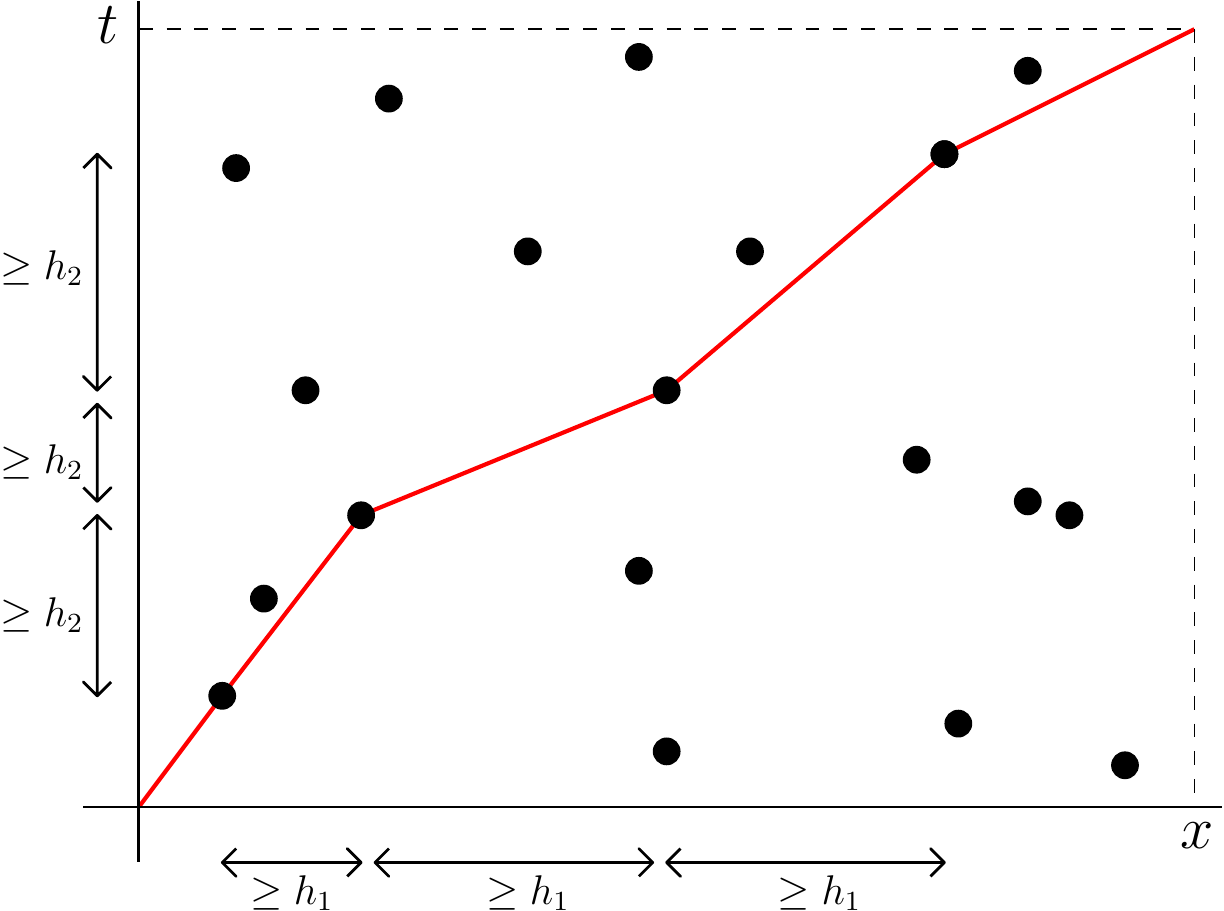}
\end{center}
\caption{A realization of $L^{{\bf h}}_{(x,t)}$  (points of $\Xi$ are represented with $\bullet$). Here we have  $L^{{\bf h}}_{(x,t)}=4$, one of the maximizing paths is drawn in red.}
\label{Fig:DefModele_Continu}
\end{figure}


In the case ${\bf h}={\bf 0}:=(0,0)$, the random variable  $L_{(x,t)}:=L^{{\bf 0}}_{(x,t)}$ is just the length of the longest increasing path.
It turns out that there exists a (random) coupling between $L$ and $L^{{\bf h}}$. As an application, we will show the following identity:

\begin{theo}\label{th:Coupling}
For every $x,t>0$, and every $k \geq 0$,
$$
\mathbb{P}( L^{{\bf h}}_{(x,t)}\le k)= \mathbb{P}( L_{(x-h_1 k,t-h_2 k)}\le k).
$$
\end{theo}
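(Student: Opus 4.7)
The plan is to construct an explicit coupling between a Poisson point process $\Xi$ of intensity $1$ on $[0,x]\times[0,t]$ and a Poisson point process $\Xi'$ of intensity $1$ on $[0, x - kh_1]\times[0, t - kh_2]$, defined on a common probability space, so that the two events $\{L^{\bf h}_{(x,t)}(\Xi) \le k\}$ and $\{L_{(x-kh_1, t-kh_2)}(\Xi') \le k\}$ coincide; Theorem~\ref{th:Coupling} then follows by taking probabilities. The heuristic guiding the coupling is elementary: if $p_1 \stackrel{\bf h}{\prec} \cdots \stackrel{\bf h}{\prec} p_L$ is a gap-${\bf h}$ increasing path in $\Xi$, then the de-shifted sequence $q_i := p_i - (i-1)(h_1, h_2)$ is strictly increasing in the usual sense, with $q_L \in [0, x - (L-1)h_1]\times[0, t-(L-1)h_2]$; conversely, any standard increasing path of length $L$ in the smaller rectangle re-shifts to a gap-${\bf h}$ path in the larger one. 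So, morally, a gap-${\bf h}$ path of length $k+1$ in the large rectangle should correspond to a standard increasing path of length $k+1$ in a rectangle of size $(x-kh_1)\times(t-kh_2)$.

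To implement this, for a configuration $\omega$ on $[0,x]\times[0,t]$ and $p = (y,s) \in \omega$, I would first introduce the level $m^{\bf h}(p)$, defined as the length of the longest gap-${\bf h}$ increasing path in $\omega \cap [0,y]\times[0,s]$ ending at $p$, so that $L^{\bf h}_{(x,t)}(\omega) = \max_{p} m^{\bf h}(p)$. I would then define a piecewise-translation map $\Phi$ that shifts each $p$ by a multiple of $(h_1, h_2)$ indexed by $m^{\bf h}(p)$, chosen so that (i)~gap-${\bf h}$ paths in $\omega$ are in length-preserving bijection with standard increasing paths in $\Phi(\omega)$, and (ii)~$\Phi(\omega) \subseteq [0, x - kh_1]\times[0, t-kh_2]$. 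The naive shift $\Phi(p) := p - (m^{\bf h}(p)-1)(h_1, h_2)$ satisfies~(i) but its image sits in $[0,x]\times[0,t]$ rather than in the target rectangle, so $\Phi$ must be refined, presumably by partitioning $[0,x]\times[0,t]$ into Hammersley-type cells indexed by $m^{\bf h}$ and applying a piecewise translation that collapses the forbidden strip of total horizontal width $kh_1$ and vertical width $kh_2$ blocked off by the gap constraint.

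The main obstacle is to verify that the pushforward of the Poisson law under this piecewise transformation is again a Poisson law of intensity $1$ on $[0, x - kh_1]\times[0, t-kh_2]$. I would handle this by exploiting the invariance of the Poisson distribution under measure-preserving rearrangements of the plane, which should apply once $\Phi$ is constructed as a piecewise translation between regions of equal area. Once this is in place, the coincidence of the two events follows immediately from the length-preserving bijection between gap-${\bf h}$ paths in $\Xi$ and standard increasing paths in $\Phi(\Xi)$.
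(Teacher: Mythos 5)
Your guiding heuristic (de-shifting a gap-${\bf h}$ path by multiples of $(h_1,h_2)$) is the right one, and the paper's proof is indeed built on a piecewise translation by level-indexed multiples of $(h_1,h_2)$. But you run the construction in the contracting direction, and this is not a cosmetic choice: it is where the proof lives or dies. Two of your specific claims fail. First, the naive map $\Phi(p)=p-(m^{\bf h}(p)-1)(h_1,h_2)$ does \emph{not} satisfy your property (i): two points at the same level that are comparable for $\stackrel{{\bf 0}}{\prec}$ but not for $\stackrel{{\bf h}}{\prec}$ (e.g.\ two level-$1$ points within distance less than $(h_1,h_2)$ of each other) are translated by the same vector, hence remain comparable in the image, so $L(\Phi(\omega))$ can strictly exceed $L^{\bf h}(\omega)$; moreover, along a gap path whose levels jump by more than one between consecutive points, $\Phi$ need not even preserve the order. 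Second, and more fundamentally, your contraction cannot be a piecewise translation ``between regions of equal area'': the source rectangle has area $xt$ while the target has area $(x-kh_1)(t-kh_2)$, so the translated level-regions necessarily overlap (superimposing Poisson points from overlapping pieces gives the wrong intensity) unless you first delete a configuration-dependent strip --- and the points you would delete are neither independent of the points you keep nor irrelevant to the event $\{L^{\bf h}_{(x,t)}\le k\}$. Justifying that the surviving points form a Poisson process of intensity one would require exactly the spatial Markov property of Hammersley lines that your sketch never invokes.

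The paper sidesteps both obstacles by going the other way: it starts from the \emph{no-gap} process $\Xi$, applies the dilation $\phi^{\bf h}(y,s)=(y+h_1L_{(y,s)^-},s+h_2L_{(y,s)^-})$, which is injective, and fills the regions missed by its image with \emph{fresh independent} Poisson points; the spatial Markov property (Proposition \ref{lem:Markovspatial}, Lemma \ref{lem:NouveauXi}) shows the resulting field $\tilde\Xi$ is again Poisson, and the gap constraint guarantees that an admissible path takes at most one of the added points per thickened Hammersley line, yielding $L_{(x,t)}(\Xi)=L^{\bf h}_{\phi^{\bf h}(x,t)}(\tilde\Xi)$. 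Note that even then the identity holds at the \emph{random} endpoint $\phi^{\bf h}(x,t)$, and a further argument (the variable $\gamma=\sup\{y\ge 0:\ L_{(x-yh_1,t-yh_2)}\ge k+1\}$) is needed to convert it into the fixed-endpoint distributional identity of the theorem. Your plan presumes the target rectangle can be fixed in advance, which is precisely what the missing part of your construction would have to deliver; as it stands the proposal has a genuine gap.
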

\noindent(In the above equation, we take the convention  $L_{(y,s)}=0$ whenever $y<0$ or $s<0$.)

The asymptotic behavior of $L_{(at,bt)}$ for every $a,b$ was obtained by Aldous-Diaconis \cite{AldousDiaconis}. (Identification of the limit actually dates back to \cite{VershikKerov}, different probabilistic proofs can be found in \cite{Sepp0,Groeneboom}.) 
\begin{theo}[Aldous-Diaconis (\cite{AldousDiaconis}, Th.5 )]
Let $a,b>0$. Then
\begin{equation}\label{eq:LimiteL_continu}
f(a,b):=\lim_{t\to +\infty}\frac{L_{( at,bt)}}{t}= 2\sqrt{ab}.
\end{equation}
The convergence holds a.s. and in $L^1$.
\label{Prop:ConcentrationLn_continu}
\end{theo}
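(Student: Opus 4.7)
The plan is to reduce to the diagonal case $a=b$ via the scaling invariance of the Poisson point process, establish the existence of an almost sure limit by a subadditive argument, and finally identify the constant as $2$ using Hammersley's interacting particle system.

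First, for any $\lambda > 0$ the map $\Phi_\lambda(y,s) = (\lambda y, s/\lambda)$ has Jacobian $1$, hence $\Phi_\lambda(\Xi)$ is again a Poisson process of intensity $1$ on $(0,+\infty)^2$; moreover $\Phi_\lambda$ preserves the coordinate-wise partial order defining increasing paths. Applied to the rectangle $(0,at)\times(0,bt)$ with $\lambda = \sqrt{b/a}$, this gives
\[
L_{(at,bt)} \stackrel{d}{=} L_{(\sqrt{ab}\,t,\,\sqrt{ab}\,t)},
\]
so the result reduces to proving $L_{(u,u)}/u \to 2$ as $u\to\infty$.

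Second, by the independence and stationarity of the restrictions of $\Xi$ to disjoint rectangles, concatenating maximal paths in the boxes $(0,m)^2$ and $(m,m+n)\times(m,m+n)$ yields the superadditive bound $L_{(m+n,m+n)} \geq L_{(m,m)} + L'_{(n,n)}$, with $L'$ an independent copy of $L$. Kingman's subadditive ergodic theorem then provides $L_{(n,n)}/n \to c$ a.s.\ for some constant $c\in[0,\infty)$, which extends to real $t$ by the monotonicity $L_{(s,s)} \leq L_{(t,t)}$ for $s\leq t$. The $L^1$ convergence follows from uniform integrability, itself implied by the classical first-moment bound
\[
\mathbb{P}(L_{(t,t)} \geq k) \leq \frac{t^{2k}}{(k!)^2},
\]
obtained by Markov's inequality applied to the expected number of increasing $k$-chains in $\Xi$; Stirling then shows that $L_{(t,t)}/t$ is bounded in $L^p$ for every $p\geq 1$.

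The substantive step is to identify $c=2$. Following Aldous-Diaconis, I would introduce Hammersley's interacting particle system driven by $\Xi$: whenever a Poisson point $(y,s)$ appears, the rightmost particle strictly to the left of $y$ (if any) jumps to $y$. Started from a suitable step initial configuration on $(0,+\infty)$, the number of particles in $(0,x)$ at time $t$ is exactly $L_{(x,t)}$. The decisive structural input is that the translation-invariant stationary measures on $\mathbb{R}$ are precisely the homogeneous $\mathrm{Poisson}(\lambda)$ laws; a Burke-type computation then shows that under $\mathrm{Poisson}(\lambda)$, the flux of particles across a fixed abscissa is Poisson in time with rate $1/\lambda$. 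Matching the step profile to these stationary measures along characteristics produces the hydrodynamic shape $f(a,b) = 2\sqrt{ab}$, and in particular $c=2$ on the diagonal. The main obstacle is clearly this last step: the scaling and subadditive arguments are soft and leave $c$ undetermined, whereas the identification $c=2$ really requires the Aldous-Diaconis analysis of invariant measures and the hydrodynamic limit of Hammersley's process.
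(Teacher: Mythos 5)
This statement is not proved in the paper at all: it is imported verbatim as Theorem~5 of Aldous--Diaconis \cite{AldousDiaconis}, so there is no internal argument to compare against. Your outline is exactly the strategy of that cited proof (and of the paper's own treatment of the superadditivity step for $L^{\bf h}$ in Section~2.1): the volume-preserving rescaling $\Phi_\lambda(y,s)=(\lambda y,s/\lambda)$ reducing to the diagonal, superadditivity plus Kingman for existence of the limit, the first-moment bound $\P(L_{(t,t)}\ge k)\le t^{2k}/(k!)^2$ for uniform integrability, and the identification of the constant through Hammersley's interacting particle system. All of the soft steps you give are correct as stated.

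Two caveats. First, the one step that carries all the content --- showing $c=2$ rather than merely $c\in[\pi/2,e]$ --- is only described, not proved: the classification of the translation-invariant stationary measures as $\mathrm{Poisson}(\lambda)$, the flux computation giving rate $1/\lambda$ across a fixed abscissa, and the hydrodynamic matching along characteristics are each nontrivial, and you correctly flag that your argument defers them entirely to Aldous--Diaconis. As a self-contained proof the proposal therefore has a genuine gap at precisely the point where the theorem stops being an exercise; as a reduction to the literature it is fine, and is no less complete than what the paper does. Second, a small but real error in the dynamics: in Hammersley's process the particle that jumps when a Poisson point appears at $(y,s)$ is the \emph{nearest particle to the right of} $y$, which moves \emph{left} to $y$ (creating a new particle at $y$ if none lies to its right); the particle trajectories are then the decreasing staircases $\mathcal{H}_\ell$ of the paper, and the identity ``number of particles in $(0,x)$ at time $t$ equals $L_{(x,t)}$'' holds. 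With your convention (the rightmost particle to the left of $y$ jumping rightward), trajectories would be increasing in time and that counting identity fails, so the reduction to the particle system would break as written.
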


Theorem \ref{th:Coupling} allows us to extend the formula \eqref{eq:LimiteL_continu} to every pair of gaps :
\begin{prop}\label{TheoLLN_continu}
For every $h_1,h_2\geq 0$, we have the following limit:
\begin{equation}\label{eq:LGN_Continu}
f^{{\bf h}}(a,b):= \lim_{t\to \infty} \frac{L^{{\bf h}}_{( at, bt)}}{t} =
\begin{cases}
\displaystyle{\frac{2(ah_2+bh_1)-2\sqrt{(ah_2-bh_1)^2+ab}}{4h_1h_2-1}} &\text{ if }h_1h_2\neq 1/4,\\
\displaystyle{\frac{ab}{h_1b+h_2a}}                                          &\text{ if }h_1h_2= 1/4.
\end{cases}
\end{equation}
The convergence holds a.s. and in $L^1$.
\end{prop}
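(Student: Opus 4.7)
The plan is to combine Theorem~\ref{th:Coupling} with the Aldous-Diaconis limit (Theorem~\ref{Prop:ConcentrationLn_continu}) to reduce the asymptotics of $L^{\bf h}_{(at,bt)}/t$ to a deterministic fixed-point problem, which I then solve explicitly by elementary algebra.

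First, I would substitute $(x,t) = (at, bt)$ and $k = \lfloor \ell t\rfloor$ in Theorem~\ref{th:Coupling}, giving
\begin{equation*}
\mathbb{P}\!\left(L^{\bf h}_{(at,bt)}/t \le \ell\right)
= \mathbb{P}\!\left(L_{((a-h_1\ell)t + O(1),\,(b-h_2\ell)t + O(1))} \le \lfloor \ell t\rfloor\right).
\end{equation*}
For $\ell$ with $a - h_1\ell > 0$ and $b - h_2\ell > 0$, the Aldous-Diaconis theorem applied to the right-hand side gives $L_{((a-h_1\ell)t,(b-h_2\ell)t)}/t \to 2\sqrt{(a-h_1\ell)(b-h_2\ell)}$ almost surely, so that probability tends to $0$ or $1$ according to whether $\ell < 2\sqrt{(a-h_1\ell)(b-h_2\ell)}$ or $\ell > 2\sqrt{(a-h_1\ell)(b-h_2\ell)}$. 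The function $\varphi(\ell) := \ell - 2\sqrt{(a-h_1\ell)(b-h_2\ell)}$ is strictly increasing (its derivative equals $1 + [h_1(b-h_2\ell) + h_2(a-h_1\ell)]/\sqrt{(a-h_1\ell)(b-h_2\ell)} > 0$), takes the value $-2\sqrt{ab} < 0$ at $0$, and becomes nonnegative at the boundary of the admissible region; hence it has a unique zero $\ell^*$, and this is the limit in probability of $L^{\bf h}_{(at,bt)}/t$.

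Second, I solve the fixed-point equation $\ell^2 = 4(a-h_1\ell)(b-h_2\ell)$, which rearranges to $(1-4h_1h_2)\ell^2 + 4(ah_2+bh_1)\ell - 4ab = 0$. When $4h_1h_2 \ne 1$, the discriminant simplifies via the identity $(ah_2+bh_1)^2 - (4h_1h_2-1)ab = (ah_2-bh_1)^2 + ab$ to $16[(ah_2-bh_1)^2+ab]$; selecting the root that recovers $2\sqrt{ab}$ at ${\bf h} = {\bf 0}$ gives the stated formula. In the degenerate case $4h_1h_2 = 1$, the quadratic collapses to a linear equation whose unique solution is $ab/(ah_2+bh_1)$.

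Finally, to upgrade convergence in probability to a.s.\ and $L^1$: the pathwise domination $L^{\bf h}_{(at,bt)} \le L_{(at,bt)}$ transfers uniform integrability from Theorem~\ref{Prop:ConcentrationLn_continu}, yielding $L^1$ convergence. For almost-sure convergence, monotonicity of $t \mapsto L^{\bf h}_{(at,bt)}$ combined with a sandwich along an exponential subsequence $t_n = (1+\eta)^n$ — on which summable concentration tails for $L$, transferred via Theorem~\ref{th:Coupling}, supply a.s.\ convergence along $(t_n)$ — followed by $\eta \to 0$ closes the argument. The main subtle point is the sign and uniqueness analysis for $\ell^*$ in the regime $4h_1h_2 > 1$, where the quadratic opens downward and has two positive roots; one has to verify that exactly one of them lies in the admissible range.
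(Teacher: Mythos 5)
Your identification of the limit is exactly the paper's argument: apply Theorem~\ref{th:Coupling} with $k=\lfloor \ell t\rfloor$, use Theorem~\ref{Prop:ConcentrationLn_continu} to see that $\mathbb{P}(L^{\bf h}_{(at,bt)}\le \ell t)$ tends to $0$ or $1$ according to the sign of $\ell-2\sqrt{(a-h_1\ell)(b-h_2\ell)}$, and solve the resulting fixed-point equation; your monotonicity analysis of $\varphi$ and the root selection in the regime $4h_1h_2>1$ are correct and in fact more explicit than the paper's ``we easily check''. Where you genuinely diverge is in upgrading convergence in probability to a.s.\ and $L^1$ convergence. The paper settles this \emph{before} identifying the limit: concatenating an optimal path in $(0,x)\times(0,t)$ with an independent one in $(x,x+x')\times(t,t+t')$ and discarding one point shows that $\set{L^{{\bf h}}_{(x,t)}-1}$ is superadditive, so Kingman's theorem already gives a.s.\ and $L^1$ convergence to a constant, and the coupling is only needed to compute that constant. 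Your route is workable but heavier: the uniform-integrability argument via the pathwise bound $L^{\bf h}_{(at,bt)}\le L_{(at,bt)}$ does yield $L^1$ convergence, but your a.s.\ argument along $t_n=(1+\eta)^n$ requires summable tail bounds for the classical $L_{(ct,dt)}$ --- the coupling of Theorem~\ref{th:Coupling} is purely distributional, so the almost-sure statement in Theorem~\ref{Prop:ConcentrationLn_continu} does not transfer, and you would need genuine concentration or large-deviation estimates for Hammersley's problem, which exist in the literature but are neither stated nor cited in the paper. The one-line superadditivity observation buys all of this for free, which is why the paper's proof of the a.s.\ and $L^1$ part is essentially a single sentence.
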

(We have no probabilistic interpretation of the case $h_1h_2=1/4$, but one can check that the right-hand side of \eqref{eq:LGN_Continu} is continuous at every point of the line  $h_1h_2=1/4$.)

In some cases the above formula for $f^{{\bf h}}(a,b)$ simplifies:
\begin{itemize}
\item If $h_1=h_2=h$, then 
$$
f^{(h,h)}(1,1)=\frac{2}{1+2h}.
$$
\item If $h_2=0$ then 
$$
f^{(h,0)}(1,1)=2\sqrt{h^2+1}-2h.
$$
\end{itemize}

For ${\bf h}=(0,0)$, the fluctuations of $L_{(at,bt)}$ around its mean have been determined by Baik-Deift-Johansson \cite{BDK}.
\begin{theo}[Baik-Deift-Johansson \cite{BDK}]\label{Prop:Flu_Ln_continu}
For every $a,b>0$ and $x\in \R$,we have 
\begin{equation*}
\lim_{t\to \infty}\P\left(\frac{L_{(at,bt)} -2\sqrt{ab}t}{(\sqrt{ab}t)^{1/3}}\le x\right)
=F_{TW}(x),
\end{equation*}
where $F_{TW}$ is the distribution function of the Tracy-Widom distribution.
\end{theo}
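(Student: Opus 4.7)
My plan is to reduce the claim to the symmetric case $a=b=1$ by a scaling argument and then invoke the standard Baik-Deift-Johansson asymptotics as a black box. For the scaling step, I would use the linear map $\phi(y,s)=(y\sqrt{b/a},\,s\sqrt{a/b})$: it has unit Jacobian, so it pushes an intensity-one Poisson process forward to an intensity-one Poisson process, and it preserves the coordinatewise order while sending the rectangle $[0,at]\times[0,bt]$ to the square $[0,\sqrt{ab}\,t]^{2}$. Consequently $L_{(at,bt)}$ and $L_{(\sqrt{ab}\,t,\,\sqrt{ab}\,t)}$ are equal in distribution, and setting $\tau=\sqrt{ab}\,t$ reduces the statement to showing
$$
\lim_{\tau\to\infty}\P\left(\frac{L_{(\tau,\tau)}-2\tau}{\tau^{1/3}}\le x\right)=F_{TW}(x).
$$

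For this symmetric version I would first Poissonize: conditionally on $\Xi\cap[0,\tau]^{2}$ containing $N$ points, the abscissae and ordinates of those points encode a uniform random permutation of size $N$, and $N$ itself is Poisson with mean $\tau^{2}$. Together with the standard de-Poissonization estimates (monotonicity in $\tau$ and Gaussian-type concentration of $L_{(\tau,\tau)}$), this reduces the problem to showing $\P\left((L_{N}-2\sqrt{N})/N^{1/6}\le x\right)\to F_{TW}(x)$, where $L_{N}$ is the longest increasing subsequence of a uniform permutation of $\{1,\dots,N\}$. Via the Robinson-Schensted correspondence, $L_{N}$ has the law of the first row of a Plancherel-distributed Young diagram; Gessel's identity then rewrites the tail probabilities $\P(L_N\le k)$ as Toeplitz determinants with symbol $e^{2\sqrt{N}\cos\theta}$, which in turn are representable as Fredholm determinants of the discrete Bessel kernel.

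The hard part will be the final step: steepest-descent asymptotics for the Riemann-Hilbert problem associated with these Toeplitz determinants, which rescale the discrete Bessel kernel to the Airy kernel on the Tracy-Widom window of size $N^{1/6}$. This is the technical heart of \cite{BDK} and is the input I would cite rather than reprove. Once it is in hand, the elementary scaling reduction above immediately yields the anisotropic statement for all $a,b>0$, so essentially no new analytic work is required beyond what is already in the literature.
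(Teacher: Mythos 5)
Your proposal is correct and matches the paper's treatment: the paper states this as a cited result of Baik--Deift--Johansson and remarks only that it ``follows by elementary poissonization arguments'' from the permutation version, which is precisely the reduction (unit-Jacobian scaling to the square, Poissonization/de-Poissonization, then \cite{BDK} as a black box for the Riemann--Hilbert analysis) that you spell out. No discrepancy to report.
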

In fact the main result of \cite{BDK} is stated for the longest increasing subsequence in a uniform permutation. Theorem \ref{Prop:Flu_Ln_continu} follows by elementary poissonization arguments. This theorem can also been extended for every pair of gaps :
\begin{prop}\label{Theoflu_continu}
For every $h_1,h_2,a,b\geq 0$ and $x\in \R$, we have 
\begin{equation}\label{eq:Flu_Continu}
\lim_{t\to \infty}\P\left(\frac{L^{\bf h}_{(at,bt)} -f^{{\bf h}}(a,b)t}{\sigma^{\bf h}(a,b)t^{1/3}}\le x\right)
= F_{TW}(x),
\end{equation}
where $F_{TW}$ is the distribution function of the Tracy-Widom distribution and
$$\sigma^{\bf h}(a,b)= \frac{f^{{\bf h}}(a,b)^{4/3}}{2^{1/3}}\frac{1}{2(bh_1+ah_2)+f^{{\bf h}}(a,b)(1-4h_1h_2)}.$$
\end{prop}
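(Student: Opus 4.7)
The plan is to combine the coupling identity of Theorem \ref{th:Coupling} with the classical Baik-Deift-Johansson fluctuation result (Theorem \ref{Prop:Flu_Ln_continu}). Set $k_t := \lfloor f^{\bf h}(a,b)\, t + x\, \sigma^{\bf h}(a,b)\, t^{1/3} \rfloor$, so that proving \eqref{eq:Flu_Continu} amounts to evaluating $\lim_{t\to\infty}\P(L^{\bf h}_{(at,bt)} \le k_t)$. Theorem \ref{th:Coupling} rewrites this probability as $\P(L_{(u_t, v_t)} \le k_t)$, with $u_t := at - h_1 k_t$ and $v_t := bt - h_2 k_t$. Introducing $A := a - h_1 f^{\bf h}(a,b)$ and $B := b - h_2 f^{\bf h}(a,b)$ (both strictly positive, since a path of length $L$ has horizontal extent at least $h_1(L-1)$ and vertical extent at least $h_2(L-1)$, so that $f^{\bf h}(a,b) \le \min(a/h_1, b/h_2)$ when $h_i>0$), one obtains $u_t = A\, t - h_1 x\, \sigma^{\bf h}(a,b)\, t^{1/3} + O(1)$ and similarly for $v_t$.

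The crucial algebraic observation is the identity $f^{\bf h}(a,b) = 2\sqrt{AB}$, which is a direct reformulation of the quadratic equation $(1-4h_1h_2)f^2 + 4(ah_2+bh_1)f - 4ab = 0$ satisfied by $f^{\bf h}(a,b)$ (readily derivable from \eqref{eq:LGN_Continu}, with the case $4h_1h_2=1$ handled by continuity). Exploiting the scale-invariance of the unit-intensity Poisson process yields the distributional identity $L_{(u_t, v_t)} \stackrel{d}{=} L_{(T_t, T_t)}$ where $T_t := \sqrt{u_t v_t}$, and a second-order Taylor expansion then gives
\begin{equation*}
2 T_t = f^{\bf h}(a,b)\, t - \frac{h_1 B + h_2 A}{\sqrt{AB}}\, x\, \sigma^{\bf h}(a,b)\, t^{1/3} + o(t^{1/3}),
\end{equation*}
so that
\begin{equation*}
\frac{k_t - 2T_t}{T_t^{1/3}} \;\longrightarrow\; \frac{x\, \sigma^{\bf h}(a,b)\, (\sqrt{AB} + h_1 B + h_2 A)}{(AB)^{2/3}}.
\end{equation*}
Substituting $a = A + h_1 f^{\bf h}(a,b)$ and $b = B + h_2 f^{\bf h}(a,b)$ into the denominator $2(bh_1 + ah_2) + f^{\bf h}(a,b)(1 - 4h_1 h_2)$ appearing in the definition of $\sigma^{\bf h}(a,b)$ simplifies it to $2(\sqrt{AB} + h_1 B + h_2 A)$, which is exactly what is needed to make the right-hand side above equal to $x$.

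It remains to invoke BDJ. Applied in the diagonal case $a=b=1$, Theorem \ref{Prop:Flu_Ln_continu} says that $F_s(y) := \P(L_{(s,s)} \le 2s + y\, s^{1/3})$ converges pointwise to the continuous function $F_{TW}$ as $s\to\infty$, hence uniformly by P\'olya's theorem on monotone convergence of distribution functions. Since $T_t \to \infty$ and $(k_t - 2T_t)/T_t^{1/3} \to x$, one concludes $\P(L_{(T_t,T_t)} \le k_t) = F_{T_t}\bigl((k_t - 2T_t)/T_t^{1/3}\bigr) \to F_{TW}(x)$, which proves \eqref{eq:Flu_Continu}. The main obstacle is really the algebraic verification that the explicit expression for $\sigma^{\bf h}(a,b)$ in the statement is exactly the one forced by the Taylor expansion above; by contrast, handling the $t$-dependent parameters of the ungapped problem is routine, thanks to the scale-invariance reduction and to the uniform convergence of the cdfs.
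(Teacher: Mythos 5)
Your proof is correct and follows essentially the same route as the paper: apply the coupling of Theorem \ref{th:Coupling} at the level $k_t=\lfloor f^{{\bf h}}(a,b)t+x\sigma^{{\bf h}}(a,b)t^{1/3}\rfloor$, reduce to the diagonal by scale invariance of the Poisson process, Taylor-expand $\sqrt{u_tv_t}$ using $f^{{\bf h}}(a,b)=2\sqrt{AB}$, and invoke Baik--Deift--Johansson. The paper carries out the equivalent computation by inverting $t$ as a function of $s=\sqrt{u_tv_t}$ and tuning $\beta=c\,\sigma^{{\bf h}}(a,b)$ at the end, but the algebra and the resulting constant $\sigma^{{\bf h}}(a,b)$ coincide with yours.
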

\noindent(In some cases the expression for $\sigma^{\bf h}(a,b)$ simplifies, for instance $\sigma^{(h,h)}(1,1)=(1+2h)^{-4/3}$.)


Thanks to the scale-invariance property of the Poisson process we also easily obtain asymptotic results in the case where gaps and intensity of the Poisson process depend on $t$ (see Section \ref{Sec:EcartQuiBouge}).

\subsubsection*{Discrete settings}

The  same strategy allows us to obtain analogous results in the discrete settings. Let $\Xi=\left(\Xi_{i,j}\right)_{i,j\in \bbZ_{> 0}}$ be  i.i.d. Bernoulli random variables  with mean $p$. We also consider  $\Xi$ as a random set of integer points of the quarter-plane by saying that $(i,j)$ is present in $\Xi$ if $\Xi_{i,j}=1$.

Let ${\bf h}=(h_1,h_2)$ be a pair of non-negative integers, we assume ${\bf h}\neq {\bf 0}=(0,0)$.
We introduce the strict order on $(\bbZ_{>0})^2$ defined by
$$
(i,j) \stackrel{{\bf h}}{\prec} (i',j') \text{ if and only if }
\begin{cases}
i+h_1\leq i',\\
j+h_2\leq j'.
\end{cases}
$$

We consider the random variable given by the length of the longest non-decreasing path from $(1,1)$ to $(m,n)$ in $\Xi$ with horizontal gaps $h_1$ and vertical gaps $h_2$. Namely,
\begin{multline*}
\Lr^{\bf h}_{(m,n)}=
\max\bigg\{L; \hbox{ there are }(i_1,j_1),\dots,(i_L,j_L)\in \Xi \text{ such that }\\
1\leq i_1 \leq \dots \leq i_L\leq m, \quad 1\leq j_1 \leq \dots \leq j_L\leq m,\quad 
(i_1,j_1) \stackrel{{\bf h}}{\prec} (i_2,j_2)  \stackrel{{\bf h}}{\prec} \dots  \stackrel{{\bf h}}{\prec} (i_L,j_L)
\bigg\}.
\end{multline*}
This problem is close to what is sometimes called \emph{slope-constrained longest increasing subsequence} (SCLIS) in the literature of algorithms \cite{SCLIS}. Two particular cases have received particular attention:
\begin{itemize}
\item If ${\bf h}=(1,1)$,  $\Lr^{(1,1)}_{(m,n)}$ is the length of the longest increasing path in $\Xi$.
\item If ${\bf h}=(1,0)$,  $\Lr^{(1,0)}_{(m,n)}$ is the length of the longest non-decreasing path.
\end{itemize}
Both problems have been first studied by Sepp\"al\"ainen (resp. in \cite{Sepp} and \cite{Sepp2}).

Similarly to our approach for the continuous settings, we will use a coupling between $\Lr^{\bf h}$ and a well-studied model: last passage percolation with geometric weights (which in turn is in correspondence with synchronous TASEP). 

Let us recall formally this latter model. Let $\Xi'=\left(\Xi'_{i,j}\right)_{i,j\in \bbZ_{> 0}}$ be  i.i.d. geometric random variables  with law 
$$\P(\Xi'_{i,j}=k)=p^k(1-p) \mbox{ for } k\ge 0$$ 
and let 
\begin{equation*}
T_{(m,n)}= 
\max \{ \sum_{(i,j)\in P} \Xi'_{i,j} \; ; \; P\in \mathcal{P}_{m,n}\},
\end{equation*}
where $\mathcal{P}_{m,n}$ denotes the set of paths from $(1,1)$ to $(m,n)$ taking only North and East steps. 

The discrete analogous of Theorem \ref{th:Coupling} is the following:

\begin{theo}\label{th:DiscreteCoupling}
Let ${\bf h}=(h_1,h_2)\neq (0,0)$. For every $m,n\geq 0$, and every $k \geq 0$,
$$
\mathbb{P}( \Lr^{\bf h}_{(m,n)}\le k)= \mathbb{P}( T_{(m-h_1 k,n-h_2 k)}\le k).
$$
\end{theo}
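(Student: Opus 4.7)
The plan is to adapt the coupling strategy of Theorem~\ref{th:Coupling} to the discrete setting, with the Bernoulli field $\Xi$ replacing the Poisson process and geometric last-passage percolation replacing the Ulam--Hammersley functional.

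The first observation is purely deterministic: the map
$$
(i_1,j_1),\dots,(i_L,j_L)\ \longmapsto\ \bigl(i_\ell-h_1(\ell-1),\ j_\ell-h_2(\ell-1)\bigr)_{\ell=1}^L
$$
is a bijection between gap-chains of length $L$ in $[1,m]\times[1,n]$ and weakly increasing sequences of length $L$ lying in $[1,m-h_1(L-1)]\times[1,n-h_2(L-1)]$. In particular, $\{\Lr^{\bf h}_{(m,n)}\ge k+1\}$ is equivalent to the existence, inside the smaller rectangle $[1,m-h_1k]\times[1,n-h_2k]$, of a weakly increasing sequence $(\tilde i_0,\tilde j_0)\le\dots\le(\tilde i_k,\tilde j_k)$ whose lifts $\Xi_{\tilde i_\ell+h_1\ell,\tilde j_\ell+h_2\ell}$ all equal $1$.

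The second step is to realize this ``lifted chain'' event as the last-passage percolation event $\{T_{(m-h_1k,n-h_2k)}\ge k+1\}$. Following the analogue of Hammersley's geometric construction, I would process the Bernoulli field along the $(h_1,h_2)$-rays: on each ray, the i.i.d.\ Bernoulli variables admit a natural repackaging into maximal runs of $1$'s separated by $0$'s, each of which carries the geometric law $p^r(1-p)$. A greedy/shadow procedure \`a la Hammersley then matches these runs with the cells of the smaller rectangle, producing a field $\tilde\Xi$ that is i.i.d.\ geometric$(p)$ and such that a weakly increasing chain of $k+1$ lifted $1$'s corresponds exactly to a north-east lattice path in $[1,m-h_1k]\times[1,n-h_2k]$ whose $\tilde\Xi$-weights sum to at least $k+1$. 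Combining this with the first step yields the claimed identity.

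The main obstacle is the second step: the marginal law of each repackaged run is manifestly geometric, but assembling the runs into a genuinely \emph{independent} geometric field on the smaller rectangle while preserving the correspondence between lifted chains and north-east lattice paths is delicate. This is the discrete counterpart of the Hammersley construction underlying Theorem~\ref{th:Coupling}, and constitutes the technical core of the argument.
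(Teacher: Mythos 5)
Your Step 1 is correct and is a nice elementary reformulation: the reindexing $(i_\ell,j_\ell)\mapsto(i_\ell-h_1(\ell-1),\,j_\ell-h_2(\ell-1))$ does biject gap-chains of length $L$ in $[1,m]\times[1,n]$ with weakly increasing sequences in $[1,m-h_1(L-1)]\times[1,n-h_2(L-1)]$, and weakly increasing multichains (each cell $c$ used at most $\tilde\Xi_c$ times) are exactly what the last-passage functional $T$ counts. This explains \emph{why} the rectangle shrinks by $(h_1k,h_2k)$ and why geometric weights must appear. But this step carries no probabilistic content; the entire theorem lives in your Step 2, which you describe but do not prove, and which you yourself flag as the ``main obstacle''. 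As written, the proposal is a plan rather than a proof.

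The gap is concrete: a per-cell repackaging of ``runs of $1$'s along the $(h_1,h_2)$-ray through $(i',j')$'' cannot be defined cell by cell, because the offset $\ell_0$ at which a run must start at cell $(i',j')$ (namely the current length of the chain upon arrival at that cell) depends on the whole history of the chain, not on the cell alone. Taking the longest run on each ray gives variables that are neither geometric nor independent, and does not yield the reverse implication (long runs at incompatible offsets do not assemble into a chain). Resolving this is exactly where the paper works: it first couples $\Lr^{\bf h}$ with $\Lr^{(1,1)}$ by a dilatation $\phi^{\bf h}(m,n)=(m+(h_1-1)\Lr^{(1,1)}_{(m-1,n-1)},\,n+(h_2-1)\Lr^{(1,1)}_{(m-1,n-1)})$ (with a separate, non-injective construction when $h_2=0$), and then couples $\Lr^{(1,1)}$ with $T$ by contracting along the $(1,1)$-diagonals --- crucially keeping only the \emph{minimal points of the Hammersley lines} (the field $\hat\Xi$) before summing along the fibres of $\phi^{\bf 0}$, so that the spatial Markov property of the Hammersley lines delivers both the i.i.d.\ geometric law $p^k(1-p)$ of the repackaged weights and the exact identity $\Lr^{(1,1)}_{(m,n)}=T_{(m',n')}$. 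None of these ingredients (the greedy/Hammersley matching fixing the offsets, the restriction to minimal points, the Markov property argument for independence, the degenerate case $h_1=0$ or $h_2=0$) appears in your proposal, so the central claim that $\tilde\Xi$ is i.i.d.\ geometric and that lifted chains correspond exactly to north-east paths remains unestablished.
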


\begin{remark}
As we will see in the proof of Theorem \ref{th:DiscreteCoupling}, many relevant quantities regarding $T$ are obtained by taking formally ${\bf h}= (0,0)$ in the formulas for $\Lr^{\bf h}$. As suggested by Theorem \ref{th:DiscreteCoupling}, the key difference is that the Bernoulli point process must replaced by a point process with geometric weights.
 
Therefore we cannnot extend our methods and results to the study of $\Lr^{(0,0)}$. This last model can be seen as (directed) site percolation on the quarter-plane, for which the critical threshold remains unknown.
\end{remark}

Theorem \ref{th:DiscreteCoupling}  is related to previous results in literature.
In the case ${\bf h}=(1,1)$, a similar coupling was implicit in \cite{Dhar98} (see also \cite{GeorgiouOrtmann,MajumdarNechaevEARMSA}).
Still in the case ${\bf h}=(1,1)$ another coupling between with \emph{asynchronous} TASEP (also called directed TASEP) was also given in \cite[Sec.3]{Georgiou},\cite{Prieehev}.

\begin{figure}
\begin{center}
\begin{tabular}{c c}
\includegraphics[width=8cm]{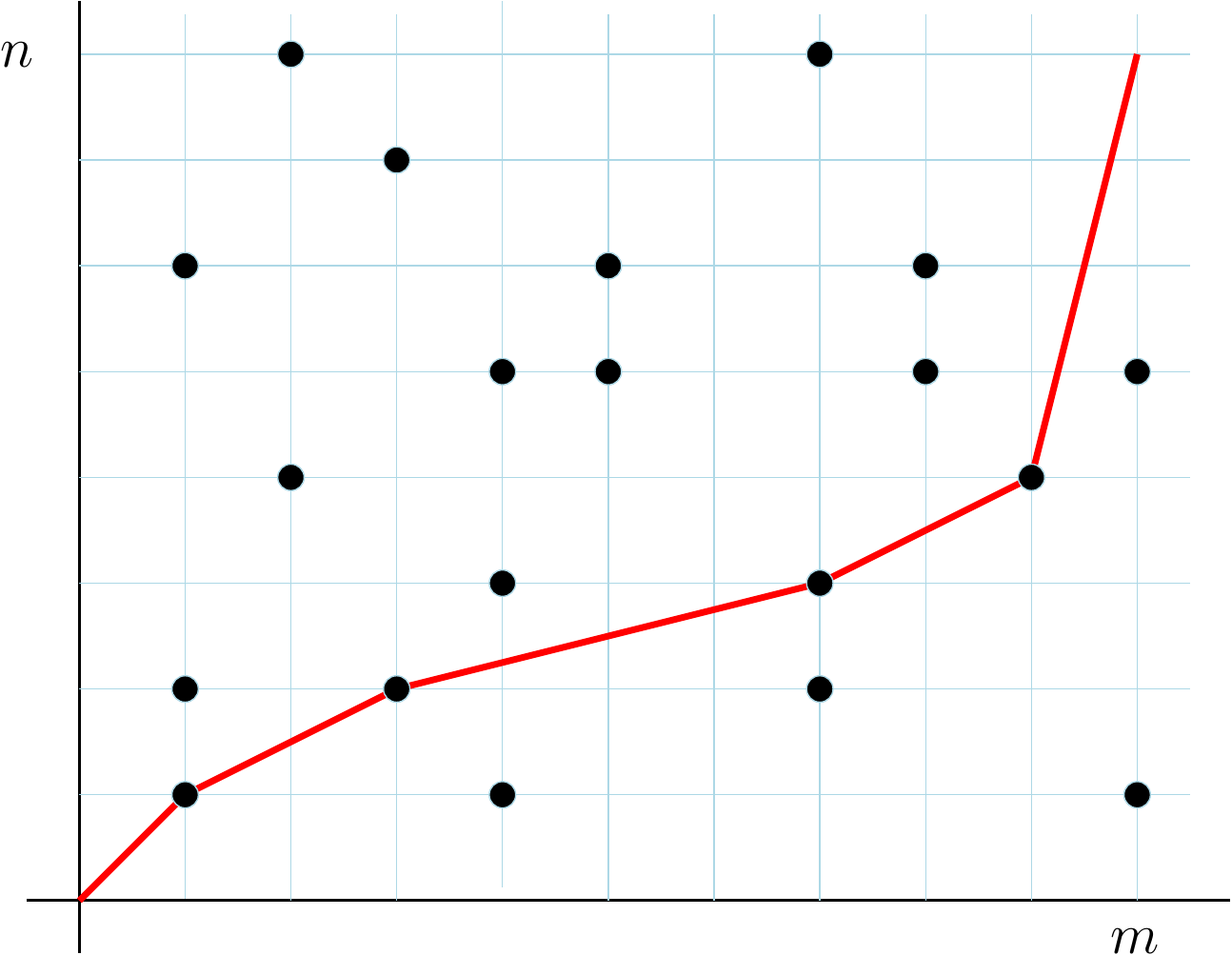} &
\includegraphics[width=8cm]{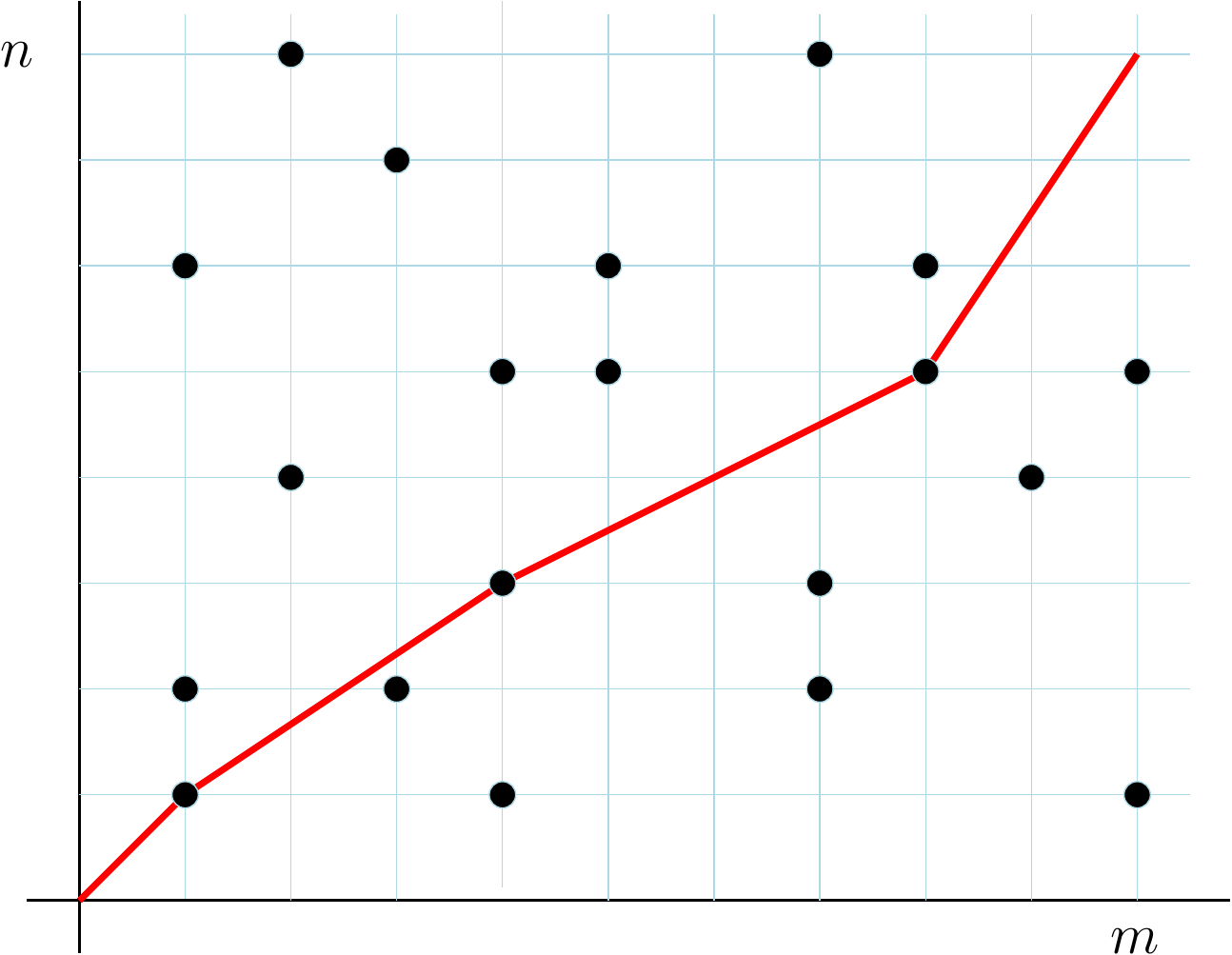}
\end{tabular}
\end{center}
\caption{Realizations of $\Lr^{(2,1)}_{(m,n)}$ (left) and $\Lr^{(3,2)}_{(m,n)}$ (right) for $(m,n)=(10,8)$ and the same sampling of $\Xi$ (points of $\Xi$ are represented with $\bullet$). Here we have $\Lr^{(2,1)}_{(m,n)}=4$, $\Lr^{(3,2)}_{(m,n)}=3$. In both pictures one of the maximizing paths is drawn in red.}
\label{Fig:SansChemin}
\end{figure}

The explicit formula for the limiting shape in last-passage percolation with geometric weights is originally due to Jockusch-Propp-Shor (in the context of synchronous TASEP).
\begin{theo}[Jockusch-Propp-Shor (\cite{ArcticCircle},Th.2), see also (\cite{Sepp3},Th.2.2)]\label{Th:JockuschProppShor}
For every $a,b>0$, $p\in(0,1)$
\begin{equation}\label{eq:gTASEP}
g(a,b):=\lim_{n\to +\infty} \frac{1}{n} T_{(\lf an\rf,\lf bn\rf)} = \frac{\sqrt{p}\left(2\sqrt{ab}+(a+b)\sqrt{p}\right)}{1-p}. 
\end{equation}
\end{theo}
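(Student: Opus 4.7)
The plan is to adopt the classical two-step strategy for such limit-shape results: first establish existence of the limit via Kingman's subadditive ergodic theorem, then identify its explicit value using the integrable structure of geometric last-passage percolation via RSK.

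For existence, the key observation is superadditivity of $T$: given $(m_1,n_1)$ and $(m_2,n_2)$, the concatenation of an optimal North-East path from $(1,1)$ to $(m_1,n_1)$ with any North-East path from $(m_1+1,n_1+1)$ to $(m_1+m_2,n_1+n_2)$ is a legal path, whence
\[
T_{(m_1+m_2,n_1+n_2)} \geq T_{(m_1,n_1)} + \widetilde{T}_{(m_2,n_2)},
\]
with $\widetilde{T}_{(m_2,n_2)}$ equal in law to $T_{(m_2,n_2)}$ and independent of $T_{(m_1,n_1)}$. The weights $\Xi'_{i,j}$ being integrable, Kingman's theorem applied along the rays $(\lf an\rf,\lf bn\rf)$ yields a deterministic limit $g(a,b)$ a.s.\ and in $L^1$, concave and $1$-homogeneous on $(0,\infty)^2$.

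To identify the explicit value I would invoke the RSK correspondence on the $m \times n$ array $(\Xi'_{i,j})$: $T_{(m,n)}$ is equidistributed with the first part $\lambda_1$ of a partition $\lambda$ drawn from the Schur measure with parameters $(\sqrt{p},\dots,\sqrt{p})$, with $m$, resp.\ $n$, copies on each side. This gives a Meixner orthogonal polynomial ensemble whose largest point admits a double-contour integral representation; steepest descent on the associated saddle-point equation produces exactly the expression for $g(a,b)$. An alternative, more probabilistic route is to exploit the coupling of geometric LPP with synchronous TASEP, whose product-geometric invariant measures have an explicit stationary current, from which the limit can be read off directly.

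The main obstacle is not existence but the identification of $g(a,b)$: subadditivity yields only concavity and homogeneity, and the specific quadratic flux $2\sqrt{pab}+(a+b)p$ must be pinned down by genuine integrable-probability input (Meixner kernel asymptotics on the one hand, or product-geometric stationarity of synchronous TASEP on the other). Once either ingredient is in hand, the computation reduces to solving the relevant saddle point or stationary-current equation, which is elementary.
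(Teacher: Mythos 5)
This theorem is not proved in the paper at all: it is imported as a black box, attributed to Jockusch--Propp--Shor \cite{ArcticCircle} and Sepp\"al\"ainen \cite{Sepp3}, and used only as an input to Proposition \ref{TheoLLN} via the coupling of Theorem \ref{th:DiscreteCoupling}. So there is no internal proof to compare against; what you have written is a reconstruction of how the result is obtained in the cited literature, and as such it is essentially accurate. Your superadditivity step is fine (the concatenated path must pass through one extra site between $(m_1,n_1)$ and $(m_1+1,n_1+1)$, but since the weights are nonnegative the inequality survives), and Kingman's theorem does yield a deterministic, concave, $1$-homogeneous limit $g(a,b)$ along rays. The identification is, as you correctly say, where all the content lies, and both routes you name are the standard ones: the RSK/Schur-measure/Meixner-ensemble route is Johansson's \cite{Johansson1}, while the probabilistic route is essentially what \cite{Sepp3} (Th.~2.2) carries out --- though there the relevant stationary object is the corner-growth model with independent geometric boundary weights on the two axes (a Burke-type increment-stationarity property, followed by an optimization over boundary parameters), rather than literally the invariant measure of parallel-update TASEP, whose translation-invariant stationary measures on the ring are not product measures; that particular phrase in your sketch should be amended. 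The only caveat is that a self-contained proof would still have to execute the saddle-point analysis of the Meixner kernel, or verify the Burke property and minimize over boundary densities to recover $\frac{\sqrt{p}\left(2\sqrt{ab}+(a+b)\sqrt{p}\right)}{1-p}$; your proposal correctly locates this work but defers it to the references, which is exactly the stance the paper itself takes.
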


Theorem \ref{th:DiscreteCoupling} then allows us to  deduce the limiting shape for $\Lr^{{\bf h}}$. The limiting constant is less explicit than in the continuous settings.
\begin{prop}\label{TheoLLN}
Let $h_1,h_2$ be two non-negative integers such that ${\mathbf h}=(h_1,h_2)\neq (0,0)$. For every $a,b>0$, there exists a constant
$$
g^{{\bf h}}(a,b):=\lim_{n\to +\infty} \frac{1}{n}\Lr^{{\bf h}}_{(\lf an\rf,\lf bn\rf)},
$$
where the convergence holds a.s. and in $L^1$. 
Moreover, we have
\begin{itemize}
\item If $\frac{h_1}{(h_2-1)+1/p}<\frac{a}{b}<\frac{h_1-1+1/p}{h_2}$, $g^{{\bf h}}(a,b)$ is the unique solution of equation
\begin{equation}\label{Systeme_xy}
g^{{\bf h}}(a,b)= g\left(a-h_1g^{{\bf h}}(a,b),b-h_2g^{{\bf h}}(a,b) \right),
\end{equation}
where $g$ is defined by \eqref{eq:gTASEP} ;
\item If $\frac{a}{b}\le\frac{h_1}{(h_2-1)+1/p}$, $g^{{\bf h}}(a,b)=\frac{a}{h_1}$, 
\item If $\frac{a}{b}\ge\frac{h_1-1+1/p}{h_2}$, $g^{{\bf h}}(a,b)=\frac{b}{h_2}$.
\end{itemize}
\end{prop}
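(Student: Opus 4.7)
The plan is to combine the coupling of Theorem~\ref{th:DiscreteCoupling} with the shape result of Theorem~\ref{Th:JockuschProppShor}. With $m=\lf aN\rf$, $n=\lf bN\rf$ and $k=\lf \ell N\rf$, the coupling identity reads
$$\P\bigl(\Lr^{\bf h}_{(m,n)}\le k\bigr)=\P\bigl(T_{(m-h_1 k,\,n-h_2 k)}\le k\bigr),$$
and the right-hand side is directly amenable to a law of large numbers as long as the two coordinates stay nonnegative. The natural candidate for the limit is the $\ell^*$ at which $\ell$ balances $g(a-h_1\ell,b-h_2\ell)$. I accordingly set
$$\varphi(\ell):=g(a-h_1\ell,\,b-h_2\ell)-\ell,\qquad \ell\in[0,\ell_{\max}],\quad \ell_{\max}:=\min(a/h_1,b/h_2)$$
(with the convention $a/h_1=+\infty$ when $h_1=0$, and symmetrically), and define $\ell^*$ as the unique zero of $\varphi$ in $[0,\ell_{\max}]$ when one exists, and as $\ell_{\max}$ otherwise.

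The first step is a one-variable analysis of $\varphi$. Using the explicit form $g(x,y)=\tfrac{2\sqrt p}{1-p}\sqrt{xy}+\tfrac{p}{1-p}(x+y)$, differentiation yields $\varphi'(\ell)<-1<0$ on the interior of $[0,\ell_{\max}]$, so $\varphi$ is strictly decreasing; moreover $\varphi(0)=g(a,b)>0$. Whether $\varphi$ reaches $0$ inside the interval is therefore decided by the sign of $\varphi(\ell_{\max})$. Assume for concreteness $\ell_{\max}=a/h_1\le b/h_2$ (the other case being symmetric). Then $g(0,b-h_2 a/h_1)=p(b-h_2 a/h_1)/(1-p)$, and a short algebraic manipulation shows
$$\varphi(a/h_1)\le 0 \iff \frac{a}{b}\ge \frac{h_1}{(h_2-1)+1/p}.$$
This matches exactly the three regimes of the proposition: the interior regime corresponds to $\varphi$ changing sign strictly inside $[0,\ell_{\max}]$, giving the unique zero $\ell^*$ characterized by~\eqref{Systeme_xy}, while the two boundary regimes correspond to $\varphi>0$ throughout $[0,\ell_{\max})$, in which case $\ell^*=\ell_{\max}=a/h_1$ (or, by the symmetric computation, $b/h_2$).

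The second step turns this into actual convergence. For the upper bound, fix $\epsilon>0$ and apply the coupling with $k=\lf(\ell^*+\epsilon)N\rf$: in the interior regime strict monotonicity of $\varphi$ gives $g(a-h_1(\ell^*+\epsilon),b-h_2(\ell^*+\epsilon))<\ell^*+\epsilon$, so Theorem~\ref{Th:JockuschProppShor} forces $\P(T_{(m-h_1k,n-h_2k)}\le k)\to 1$; in the boundary regime one coordinate of $T$ becomes negative, so $T=0\le k$ trivially. The matching lower bound uses the coupling with $k=\lf(\ell^*-\epsilon)N\rf$ and the inequality $\varphi(\ell^*-\epsilon)>0$ (valid in both regimes), which gives $\P(T>k)\to 1$, hence $\P(\Lr^{\bf h}/N>\ell^*-\epsilon)\to 1$. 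Since $\Lr^{\bf h}/N$ is deterministically bounded by $\ell_{\max}+O(1/N)$, dominated convergence upgrades this to $L^1$ convergence. For the almost-sure statement I would invoke the standard exponential concentration of $T_{(\lf xN\rf,\lf yN\rf)}/N$ around $g(x,y)$ (a byproduct of the determinantal/Schur-measure representation underlying Theorem~\ref{Th:JockuschProppShor}), plug it into the coupling identity and conclude by Borel--Cantelli. The main obstacle is not any single step but the regime bookkeeping: one must check that the algebraic condition $\varphi(\ell_{\max})=0$ reduces precisely to the thresholds $a/b=h_1/((h_2-1)+1/p)$ and $a/b=(h_1-1+1/p)/h_2$ of the proposition, and that the three expressions for $g^{\bf h}$ agree along the two interfaces so that the resulting shape function is continuous in $(a,b)$.
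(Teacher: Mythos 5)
Your proposal is correct and follows essentially the same route as the paper: evaluate the coupling identity of Theorem~\ref{th:DiscreteCoupling} at $k=\lf\lambda n\rf$, feed it into Theorem~\ref{Th:JockuschProppShor}, and locate the limit as the threshold where $\lambda-g(a-h_1\lambda,b-h_2\lambda)$ changes sign, with the same case analysis at $\lambda_{\max}=\min(a/h_1,b/h_2)$ yielding the flat-edge conditions. The only divergence is secondary: for the almost-sure statement the paper relies on superadditivity and Kingman's theorem (as set up for the continuous case), whereas you invoke exponential concentration of $T$ plus Borel--Cantelli, which is valid but imports a result the paper does not state.
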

The second and third cases correspond to a \emph{flat edge} in the limiting shape. This differs from the continuous case.

We explicit here the solution of \eqref{Systeme_xy} in some cases: 
\begin{itemize}
\item {\bf Increasing paths.} For ${\bf h}=(1,1)$ the above formula reduces to
$$
g^{(1,1)}(a,b)=
\begin{cases}
\displaystyle{\frac{\sqrt{p}\left(2\sqrt{ab}-(a+b)\sqrt{p}\right)}{1-p}} &\text{ if }p< \min\set{a/b,b/a},\\
\min\set{a,b} &\text{ otherwise.}\\
\end{cases}.
$$
Thus we recover the asymptotic behavior of $\Lr^{(1,1)}_{(\lf an\rf,\lf bn\rf)}$ which was obtained by Sepp\"al\"ainen in \cite{Sepp} using hydrodynamic limits of a given particle system (see also \cite[Sec.III]{Majumdar}, \cite[Th.1.1.]{NousAlea} ,\cite[Th.2.2]{CiechGeorgiou} for different proofs).
\item {\bf Non-decreasing paths.} If ${\bf h}=(1,0)$ the above formula reduces to
$$
g^{(1,0)}(a,b)= 
\begin{cases}
2\sqrt{abp(1-p)}+(a-b)p &\text{ if }p< a/(a+b),\\
a &\text{ otherwise.}\\
\end{cases}
$$
This was also first proved by Sepp\"al\"ainen in a second article (\cite{Sepp2}, Th.1) (see again \cite{NousAlea} for a different proof). In the more general case ${\bf h}=(h,0)$ we obtain with \eqref{Systeme_xy} the following expression (we only write the formula for $a=b=1$):
$$
g^{(h,0)}(1,1)=
\begin{cases}
 \displaystyle{\frac{2(1+h)p(1-p)+2\sqrt{p(1-p+h^2p)(1-p)}}{\left(h\sqrt{p}+\sqrt{(1-p+h^2p)(1-p)}\right)^2}} & \text{ if }p<1/(h+1),\\
 1/h  & \text{ otherwise.}
\end{cases}
$$
\item {\bf Symmetric case.} If $(a,b)=(1,1)$ and $h_1=h_2=h$ we can easily solve \eqref{Systeme_xy} and we get
$$
g^{(h,h)}(1,1)=   \frac{2\sqrt{p}}{1+(2h-1)\sqrt{p}}.
$$
\end{itemize}

\section{Proofs in the continuous settings}
We fix a pair ${\bf h}=(h_1,h_2)$ of non-negative real numbers all along this section.
\subsection{Preliminary results}
We first justify that $\frac{1}{t}L^{{\bf h}}_{(at,bt)}$ converges almost surely and in $L^1$.
Let us stress that for any $x,x',t,t'\ge 0$, we have the stochastic domination
$$L^{{\bf h}}_{(x+x',t+t')}\succcurlyeq L^{{\bf h}}_{(x,t)} +L'^{{\bf h}}_{(x',t')}-1,$$
where $L'^{{\bf h}}_{(x',t')}$ has the same  distribution as $L^{{\bf h}}_{(x',t')}$ but is independent of $L^{{\bf h}}_{(x,t)}$. 

Indeed, if $(i_1,j_1),\dots,(i_L,j_L)$ is a longest increasing path in $\Xi$ with gaps $\bf h$ in $(0,x)\times(0,t)$ and $(i'_1,j'_1),\dots,(i'_{L'},j'_{L'})$   a longest increasing path in $(x,x+x')\times(t,t+t')$ 
, then 
$$
(i_1,j_1),\dots,(i_L,j_L),(i'_2,j'_2),\dots,(i'_{L'},j'_{L'})
$$
is an  increasing path  with  gaps $h$ in $(0,x+x')\times(0,t+t')$.
Thus, the family of random variables $\set{L^{{\bf h}}_{(x,t)}-1}_{x>0,t>0}$ is superadditive. Hence, Kingman's subadditive theory (see for example \cite[Th.A2-A3]{Romik}) implies the existence of a constant
$$
f^{{\bf h}}(a,b) := \lim_{t \to \infty} \frac{L^{{\bf h}}_{(at,bt)}}{t},$$
where the limit is a.s. and in $L^1$.

\subsection{Hammersley's lines and dilatation}

A very useful way to handle the random variables $L^{\bf h}_{(x,t)}$ is the geometric interpretation of Hammersley's lines. In the classical case ${\bf h}={\bf 0}$ this construction was first implicitly introduced by Hammersley \cite{HammersleyHistorique}, a more explicit construction was given by Aldous-Diaconis in \cite{AldousDiaconis} (continuous settings) and by Sepp\"al\"ainen \cite{Sepp} (discrete settings).

\begin{figure}[h!]
\begin{center}
\includegraphics[width=10cm]{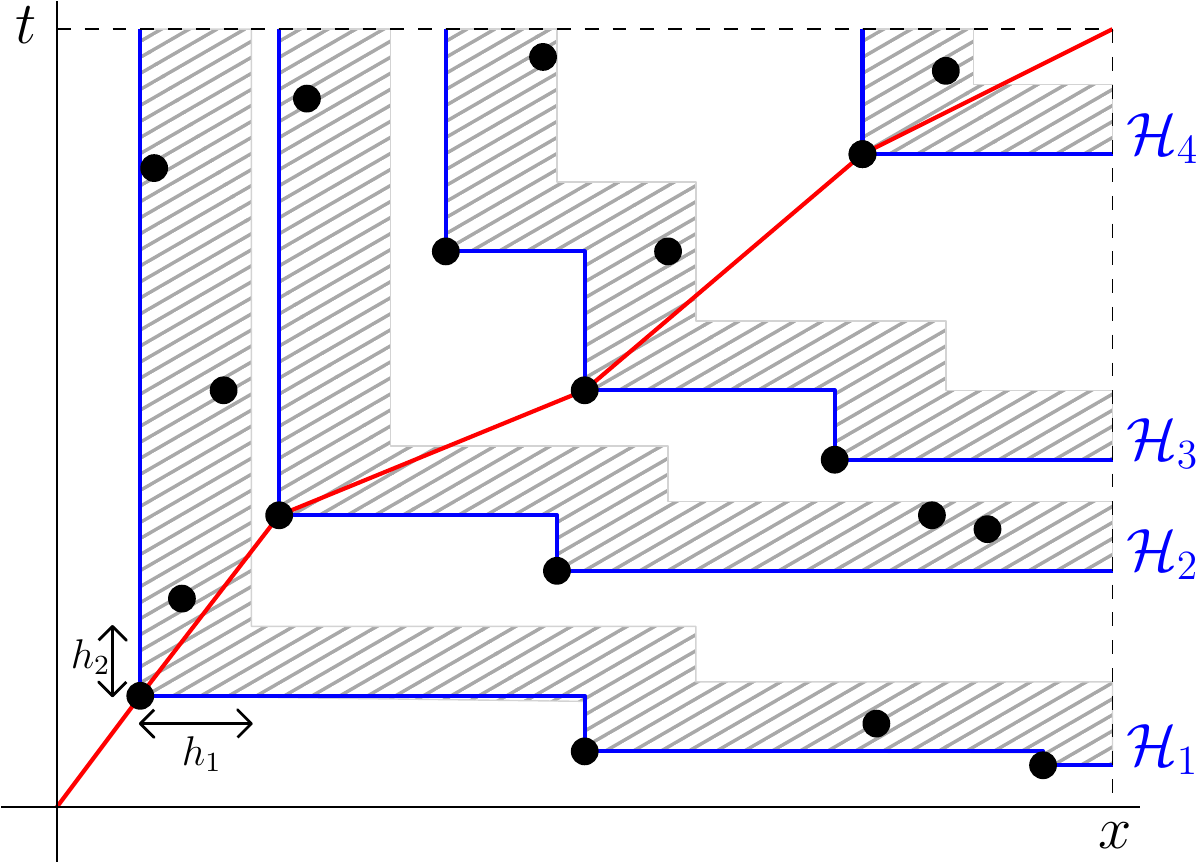}
\end{center}
\caption{An example of Hammersley lines for the same realization of $\Xi$ as that of Fig.\ref{Fig:DefModele_Continu}. The four Hammersley lines are drawn in blue. }
\label{Fig:DefModele_Continu_AvecLignes}
\end{figure}

We now define Hammersley lines formally. These are a sequence $\mathcal{H}^{\bf h}_1,\mathcal{H}^{\bf h}_2,\dots$ of broken lines in $(0,+\infty)^2$ defined inductively as follows (an example is provided in Fig.\ref{Fig:DefModele_Continu_AvecLignes}).

The broken line $\mathcal{H}^{\bf h}_1$ is the shortest path made of vertical and horizontal straight lines whose minimal points for $\stackrel{{\bf 0}}{\prec}$ are exactly  the minimal points of $\Xi$ for  $\stackrel{{\bf 0}}{\prec}$.

The  line $\mathcal{H}^{\bf h}_2$ is defined as follows: we remove the points of $\mathcal{H}_1^{\bf h}+[0,h_1]\times[0,h_2]$ (hatched in gray in Fig.\ref{Fig:DefModele_Continu_AvecLignes})  and reiterate the procedure: $\mathcal{H}^{\bf h}_2$ is the shortest path made of vertical and horizontal straight lines whose minimal points for $\stackrel{{\bf 0}}{\prec}$ are exactly  the minimal points of $\Xi\setminus \left(\mathcal{H}^{\bf h}_1+[0,h_1]\times[0,h_2]\right)$  for  $\stackrel{{\bf 0}}{\prec}$.
Inductively we define $\mathcal{H}^{\bf h}_3,\mathcal{H}^{\bf h}_4,\dots$ in the same way.

\begin{lem}
For each $(x,t)\in (0,+\infty)^2$, there are exactly  $L^{\bf h}_{(x,t)}$ distinct Hammersley lines which intersect $(0,x)\times (0,t)$.
\label{lem:NbLignes}
\end{lem}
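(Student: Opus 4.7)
The plan is to adapt the classical Aldous--Diaconis correspondence between the longest increasing path and the Hammersley lines to the gap setting. Let $N$ denote the number of Hammersley lines meeting the open box $(0,x)\times(0,t)$; the goal is to establish $L^{\bf h}_{(x,t)}=N$ by proving both inequalities, almost surely.

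For $L^{\bf h}_{(x,t)}\geq N$, I will build inductively an ${\bf h}$-increasing chain of corners $p_1\stackrel{{\bf h}}{\prec}\cdots\stackrel{{\bf h}}{\prec}p_N$ with $p_k$ an inner corner of $\mathcal{H}^{\bf h}_k$. The top point $p_N$ exists because any staircase meeting the open box has an inner corner in it (the lower-left endpoint of any segment crossing the box is a point of $\Xi\cap(0,x)\times(0,t)$, since points of $\Xi$ have positive coordinates). The inductive step reduces to the key geometric claim below, and since $p_{k-1}\leq p_k$ component-wise, the whole chain automatically stays in the box. For $L^{\bf h}_{(x,t)}\leq N$, for each $p\in\Xi$ let $\mu(p)$ be the step at which $p$ is removed from the remaining set during the construction; the key sub-claim is that, almost surely, $q\stackrel{{\bf h}}{\prec}q'$ implies $\mu(q)<\mu(q')$. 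Applied to a maximizing path $p_1\stackrel{{\bf h}}{\prec}\cdots\stackrel{{\bf h}}{\prec}p_L$ in the box, this yields $L$ distinct indices $\mu(p_i)$, and each corresponding line $\mathcal{H}^{\bf h}_{\mu(p_i)}$ must contain an inner corner $\leq p_i$ (since $p_i$ lies in its thickening and every point of the staircase dominates some inner corner), which then lies in $(0,x)\times(0,t)$.

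The heart of the argument is the following geometric lemma: if $p$ is remaining at step $m-1$ but is not $\stackrel{{\bf 0}}{\prec}$-minimal in the remaining set, then among the inner corners $r^{(i)},\dots,r^{(j)}$ of $\mathcal{H}^{\bf h}_{m-1}$ that are $\stackrel{{\bf 0}}{\prec}$-below $p$, at least one satisfies $r^{(l)}\stackrel{{\bf h}}{\prec}p$. I would prove this by contradiction: if none does, then for each such $l$ one has $p_1-r^{(l)}_1<h_1$ or $p_2-r^{(l)}_2<h_2$. Since $r^{(l)}_1$ is increasing in $l$ and $r^{(l)}_2$ is decreasing, the first condition cuts out a final segment of indices and the second an initial segment; together they cover $\{i,\dots,j\}$ and meet at some transition index $l_1$. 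The outer corner $O=(r^{(l_1+1)}_1,r^{(l_1)}_2)$ of $\mathcal{H}^{\bf h}_{m-1}$ then satisfies $p-O\in[0,h_1)\times[0,h_2)$, which forces $p\in\mathcal{H}^{\bf h}_{m-1}+[0,h_1]\times[0,h_2]$ and contradicts the fact that $p$ survived step $m-1$. Boundary cases in which one tail alone covers $\{i,\dots,j\}$ are handled in the same spirit, using the unbounded end-segments of the staircase. Applying the lemma with $p=p_k$ and $m=k$ produces $p_{k-1}$ in the inductive step of direction 2, while a parallel analysis at step $m=\mu(q')$ yields the monotonicity of $\mu$ needed in direction 1, up to the negligible event that a pair of points of $\Xi$ satisfies $q'-q=(h_1,h_2)$ exactly.
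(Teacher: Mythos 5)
Your proof is correct and follows essentially the same route as the paper's: the upper bound comes from the fact that a thickened line $\mathcal{H}^{\bf h}_\ell+[0,h_1]\times[0,h_2]$ can contain at most one point of an admissible path (your strict monotonicity of the removal index $\mu$ is an equivalent formulation), and the lower bound from descending from $\mathcal{H}^{\bf h}_N$ to $\mathcal{H}^{\bf h}_1$ and extracting at each step an inner corner that is $\stackrel{{\bf h}}{\prec}$-below the current point. Your ``key geometric claim'' is simply a careful justification of the step the paper asserts without proof (that the translated line $\mathcal{H}^{\bf h}_{\ell-1}+(h_1,h_2)$ meets $(0,y_\ell)\times(0,s_\ell)$ at an inner corner), so the approach is the same, only more detailed.
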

\begin{proof}[Proof of Lemma \ref{lem:NbLignes}]
Let denote by  $\mathcal{N}_{(x,t)}$ the number of distinct Hammersley lines which intersect $(0,x)\times (0,t)$.

\noindent {\bf Proof of }$L^{\bf h}_{(x,t)}\leq \mathcal{N}_{(x,t)}$. Let $P=(y_1,s_1)\prec \dots \prec(y_{L_{(x,t)}},s_{L_{(x,t)}})$ be a maximizing path in $\Xi$ for $L_{(x,t)}$. For every $\ell \leq \mathcal{N}_{(x,t)}$, there is at most one point of $P$ in the area $\mathcal{H}_\ell^{\bf h}+[0,h_1]\times[0,h_2]$. Therefore $L^{\bf h}_{(x,t)}\leq \mathcal{N}_{(x,t)}$.

\noindent {\bf Proof of }$L^{\bf h}_{(x,t)}\geq \mathcal{N}_{(x,t)}$. Let  $\mathcal{H}^{\bf h}_1,\mathcal{H}^{\bf h}_2,\dots,\mathcal{H}^{\bf h}_\ell$ be given, we will construct an admissible path with $\ell$ points of $\Xi$ (from top-right to bottom-left).
 We first take any point $(y_\ell,s_\ell)$ of $\mathcal{H}^{\bf h}_\ell$. Let $\hat{\mathcal{H}}=\mathcal{H}^{\bf h}_{\ell-1}+(h_1,h_2)$ be the translation of $\mathcal{H}^{\bf h}_{\ell-1}$ by the gaps. 
By construction of $\mathcal{H}^{\bf h}_{\ell}$ the broken line $\hat{\mathcal{H}}$ intersects  $(0,y_\ell)\times(0,s_\ell)$.
Since $\hat{\mathcal{H}}$ takes only directions North/West, necessarily there is a point $(y_{\ell -1}+h_1,s_{\ell-1}+h_2)\in \hat{\mathcal{H}}\cap (0,y_\ell)\times(0,s_\ell)$, with $(y_{\ell -1},s_{\ell-1})\in\Xi$ .

\begin{center}
\includegraphics[width=6cm]{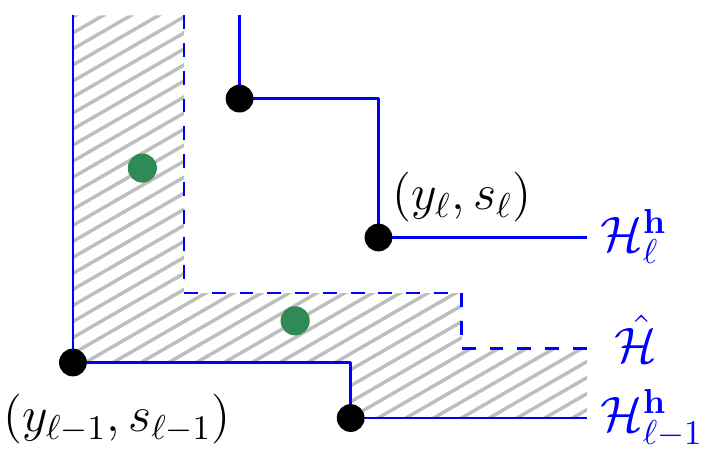}
\end{center}

Therefore, $(y_{\ell -1},s_{\ell-1})\stackrel{{\bf h}}{\prec} (y_{\ell},s_{\ell})$. By induction we construct an admissible path of $\ell$ points in $\Xi$.

\end{proof}



The following Proposition was used in \cite{AldousDiaconis} in the case ${\bf h}={\bf 0}$, it extends plainly to the general case. 
\begin{prop}[spatial Markov property for Hammersley's lines]
Conditional on the $\ell$-th Hammersley line $\mathcal{H}^{\bf h}_\ell$, 
$$
\set{\Xi_{y,s},\ (y,s)\stackrel{{\bf 0}}{\succ} \mathcal{H}^{\bf h}_\ell}
\text{ is independent of }
\set{\Xi_{y,s},\ (y,s)\stackrel{{\bf 0}}{\prec} \mathcal{H}^{\bf h}_\ell},
$$
and distributed as a homogeneous Poisson process with intensity one.
Here $(y,s)\stackrel{{\bf 0}}{\succ} \mathcal{H}^{\bf h}_\ell$ (resp. $\stackrel{{\bf 0}}{\prec}$) means that $(y,s)\stackrel{{\bf 0}}{\succ} (y',s')$ (resp.~$\stackrel{{\bf 0}}{\prec}$) for at least one point $(y',s')$ in $\mathcal{H}^{\bf h}_\ell$.

In particular, conditional on  $\mathcal{H}^{\bf h}_\ell$, the line $\mathcal{H}^{\bf h}_{\ell+1}$ is independent of $\mathcal{H}^{\bf h}_1,\mathcal{H}^{\bf h}_2,\dots,\mathcal{H}^{\bf h}_{\ell-1}$.
\label{lem:Markovspatial}
\end{prop}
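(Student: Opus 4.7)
The plan is to exploit the fact that the Hammersley line $\mathcal{H}^{\bf h}_\ell$ is a \emph{stopping line} for a natural spatial filtration of the Poisson point process $\Xi$, and then to invoke the standard strong Markov property of a Poisson process at a stopping line, exactly as Aldous--Diaconis do in the case ${\bf h}={\bf 0}$ \cite{AldousDiaconis}. The addition of the gap ${\bf h}$ affects only the \emph{thinning} rule that defines the successive lines, and this rule is measurable with respect to the information already revealed, so the classical argument carries over.

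Concretely, I would proceed by induction on $\ell$, proving as the main structural lemma that the broken line $\mathcal{H}^{\bf h}_\ell$ is a measurable function of the restriction $\Xi \cap R^-_\ell$, where
$$
R^-_\ell := \{(y,s)\in(0,\infty)^2 \ :\ (y,s)\stackrel{{\bf 0}}{\preceq}\mathcal{H}^{\bf h}_\ell \}.
$$
For $\ell=1$ this is immediate since $\mathcal{H}^{\bf h}_1$ is the staircase through the minimal points of $\Xi$, all of which lie on $\mathcal{H}^{\bf h}_1 \subset R^-_1$; changing $\Xi$ in the strict ``above'' region $\{(y,s)\stackrel{{\bf 0}}{\succ}\mathcal{H}^{\bf h}_1\}$ cannot alter the set of minimal points. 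The inductive step proceeds identically: by induction each of $\mathcal{H}^{\bf h}_1,\dots,\mathcal{H}^{\bf h}_{\ell-1}$ is measurable with respect to $\Xi \cap R^-_{\ell-1}$, the thinned process $\Xi\setminus\bigcup_{k<\ell}(\mathcal{H}^{\bf h}_k+[0,h_1]\times[0,h_2])$ is therefore a measurable function of $\Xi\cap R^-_\ell$, and its minimal points all lie on $\mathcal{H}^{\bf h}_\ell$.

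The core probabilistic step is the strong spatial Markov property: if $S$ is a random closed lower set such that the event $\{(y,s)\in S\}$ is $\sigma(\Xi\cap S)$-measurable for every $(y,s)$, then conditional on $\sigma(\Xi\cap S)$, the restriction $\Xi\cap S^c$ is independent of $\Xi\cap S$ and is a homogeneous Poisson process of intensity one on $S^c$. This is the by-now classical stopping-set version of the restriction theorem for Poisson processes, and it can be proved by approximating $S$ from above by the union of small dyadic cells $C$ satisfying $\{C\subset S\}\in\sigma(\Xi\cap C^-)$, using independence of the Bernoulli indicators on disjoint cells, and passing to the limit via monotone class arguments. Applying this with $S=R^-_\ell$ (which is a stopping set by the preceding paragraph) yields the first assertion of the proposition.

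For the ``in particular'' statement, one observes that the line $\mathcal{H}^{\bf h}_{\ell+1}$ is determined by $\mathcal{H}^{\bf h}_\ell$ together with $\Xi$ restricted to $(R^-_\ell)^c$, since any point of $\Xi$ contributing to $\mathcal{H}^{\bf h}_{\ell+1}$ must lie above $\mathcal{H}^{\bf h}_\ell$. By the spatial Markov property this data is independent of $\sigma(\mathcal{H}^{\bf h}_1,\dots,\mathcal{H}^{\bf h}_{\ell-1})\subset\sigma(\Xi\cap R^-_{\ell-1})\subset\sigma(\Xi\cap R^-_\ell)$ conditional on $\mathcal{H}^{\bf h}_\ell$. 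The only genuine obstacle is the measure-theoretic formalization of ``stopping set'' for a planar Poisson process, but this is standard and already used (implicitly or explicitly) in \cite{AldousDiaconis}; no new work is needed for general ${\bf h}$.
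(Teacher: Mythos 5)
Your proposal is correct and follows essentially the same route as the paper, whose own proof is only a two-line sketch observing that membership of $(y,s)$ in $\mathcal{H}^{\bf h}_\ell$ depends only on $\Xi$ restricted to $[0,y]\times[0,s]$ and then implicitly invoking the strong Markov property of the Poisson process as in Aldous--Diaconis. You simply make explicit the stopping-set measurability induction and the restriction theorem that the authors leave implicit; no substantive difference in approach.
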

\begin{proof}
Let $\mathcal{H}^{\bf h}_1,\mathcal{H}^{\bf h}_2,\dots,\mathcal{H}^{\bf h}_{\ell-1}$ be given.
By construction of Hammersley lines, the fact that $(y,s)$ belongs to $\mathcal{H}^{\bf h}_{\ell}$ or not only depends on $\Xi$ in the rectangle $[0,y]\times [0,s]$.
\end{proof}

We want to make a coupling between random variables $L_{(x,t)}$ and $L^{\bf h}_{(x',t')}$ for some $x'\geq x$, $t'\geq t$. We fix a realization of $\Xi$ in the quarter-plane, and denote by $\set{L_{(x,t)}(\Xi),\ (x,t)\in (0,+\infty)^2}$ the lengths of the longest paths corresponding to this realization.

We introduce the (random) function
$$
\begin{array}{r c c c}
\phi^{\bf h}: & (0,+\infty)^2 &   \to   &    (0,+\infty)^2 \\
      & (y,s)         & \mapsto & (y+h_1 L_{(y,s)^-},s+h_2L_{(y,s)^-}),
\end{array}
$$
where $L_{(y,s)^-} = \lim_{\eps \to 0} L_{(y-\eps,s-\eps)}$.

An example is drawn in Fig.\ref{Fig:FonctionPhi_PourLemmeInprouvable}.
By construction, the image by $\phi^{\bf h}$ of every Hammersley line $\mathcal{H}^{{\bf 0}}_\ell$ is a translation of $\mathcal{H}^{{\bf 0}}_\ell$ (and the area between two consecutive Hammersley lines $\mathcal{H}^{{\bf 0}}_\ell,\mathcal{H}^{{\bf 0}}_{\ell +1}$ is also translated by $\phi^{\bf h}$). 

\begin{figure}
\begin{center}
\includegraphics[width=14cm]{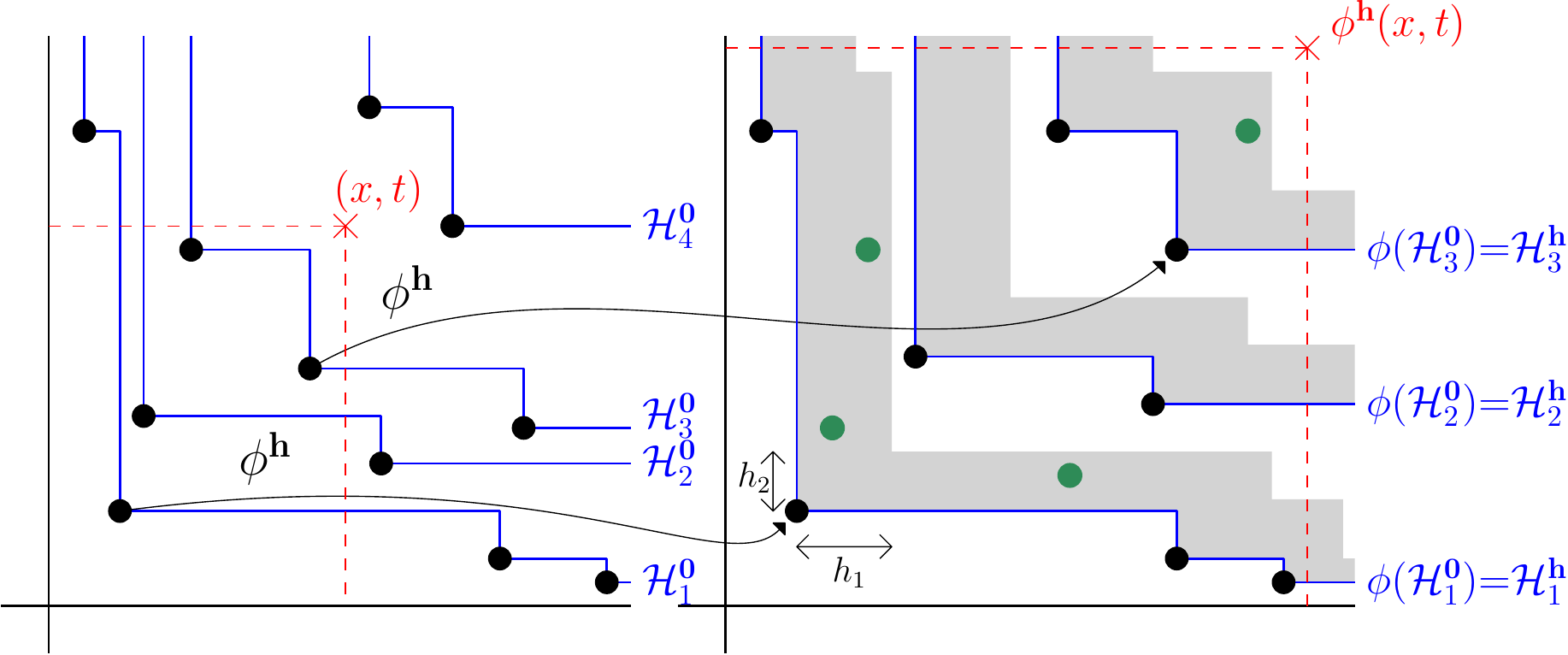}
\end{center}
\caption{An example of the function $\phi^{\bf h}$. Left: A sample of $\Xi$. Right: The same realization after the dilatation $\phi^{\bf h}$. 
The gray areas correspond to regions which are not in the image of $\phi^{\bf h}$. New points of $\tilde{\Xi}\setminus \phi^{\bf h}(\Xi)$ are drawn in green.
We have that $L_{(x,t)}(\Xi)= 3=L^{{\bf h}}_{\phi^{\bf h}(x,t)}(\tilde{\Xi})$, as stated in Eq.\eqref{eq:Coupling}.
}
\label{Fig:FonctionPhi_PourLemmeInprouvable}
\end{figure}

The main idea is that on the right picture of Figure \ref{Fig:FonctionPhi_PourLemmeInprouvable} we re-sample new points in the regions which do not belong to the image of $\phi^{\bf h}$, according to an independent Poisson process.

\begin{lem}[Dilatation]
Let $\tilde{\Xi}$ be the field of random points defined by
$$
\begin{cases}
\tilde{\Xi}_{y',s'}&= \Xi_{y,s} \text{ if }(y',s')\in\mathrm{Image}(\phi^{\bf h}) \text{ and }\phi^{\bf h}(y,s)=(y',s'),\\
\tilde{\Xi}_{y',s'}&= \mathbf{Y}_{y',s'} \text{ if }(y',s')\notin\mathrm{Image}(\phi^{\bf h}),
\end{cases}
$$
where $\mathbf{Y}$ is a homogeneous Poisson process with intensity one, independent of $\Xi$.
Then $\tilde{\Xi}$ is also a homogeneous Poisson process with intensity one.
\label{lem:NouveauXi}
\end{lem}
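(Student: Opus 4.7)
The plan is to condition on the entire collection of $\mathbf{0}$-Hammersley lines $(\mathcal{H}^{{\bf 0}}_\ell)_{\ell\ge 1}$, to recognise $\phi^{\bf h}$ as a piecewise translation, and then to combine the spatial Markov property (Proposition \ref{lem:Markovspatial}, applied with $\mathbf{h}=\mathbf{0}$) with the translation invariance of the Poisson process.

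First I would observe that the lines $\mathcal{H}^{{\bf 0}}_1,\mathcal{H}^{{\bf 0}}_2,\dots$ partition $(0,+\infty)^2$ into countably many open strips $S_0,S_1,S_2,\dots$, where $S_\ell$ is the region lying strictly above $\mathcal{H}^{{\bf 0}}_\ell$ and strictly below $\mathcal{H}^{{\bf 0}}_{\ell+1}$, and $S_0$ denotes the region below $\mathcal{H}^{{\bf 0}}_1$. By Lemma \ref{lem:NbLignes}, for every $(y,s)$ in $S_\ell$ one has $L_{(y,s)^-}=\ell$, so that $\phi^{\bf h}$ acts on $S_\ell$ simply as translation by the fixed vector $(\ell h_1,\ell h_2)$. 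Consequently $\mathrm{Image}(\phi^{\bf h})$ is the disjoint union $\bigsqcup_\ell \bigl(S_\ell+(\ell h_1,\ell h_2)\bigr)$, and its complement $G$ decomposes into "gap bands" each sitting between two consecutive translated strips.

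Next I would iterate Proposition \ref{lem:Markovspatial} to obtain that, conditional on the full sequence $(\mathcal{H}^{{\bf 0}}_\ell)_\ell$, the restrictions $\Xi\cap S_\ell$ are mutually independent, each being a homogeneous Poisson process of intensity one on its strip. By the translation invariance of the Poisson process, the image $\phi^{\bf h}(\Xi\cap S_\ell)$ is then Poisson(1) on the translated strip $S_\ell+(\ell h_1,\ell h_2)$, independently across $\ell$. The auxiliary field $\mathbf{Y}$ is Poisson(1) on $G$ and independent of everything else by construction. Conditional on $(\mathcal{H}^{{\bf 0}}_\ell)_\ell$, the field $\tilde{\Xi}$ is therefore the superposition, over a measurable partition of $(0,+\infty)^2$, of mutually independent Poisson(1) processes, hence is itself Poisson(1) on the whole quarter-plane. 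Since this conditional law does not depend on the Hammersley lines, the unconditional law of $\tilde{\Xi}$ is Poisson(1), as required.

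The main obstacle I anticipate is the rigorous iteration in the second step: Proposition \ref{lem:Markovspatial} is stated one line at a time, and one needs an induction on $\ell$ to deduce joint conditional independence of all the strips, using that $\mathcal{H}^{{\bf 0}}_{\ell+1}$ is measurable with respect to the restriction of $\Xi$ to the region strictly above $\mathcal{H}^{{\bf 0}}_{\ell}$. A minor bookkeeping point is to check that, almost surely, no point of $\Xi$ lies on any Hammersley line (each line is a locally finite union of horizontal and vertical segments, hence has Lebesgue measure zero), so that the partition of $(0,+\infty)^2$ by the open strips $S_\ell$ covers the support of $\Xi$ without loss.
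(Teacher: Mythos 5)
Your identification of $\phi^{\bf h}$ as a piecewise translation --- by $(\ell h_1,\ell h_2)$ on the $\ell$-th level set of $L_{(\cdot)^-}$ --- and the resulting decomposition of $\mathrm{Image}(\phi^{\bf h})$ and of its complement into gap bands is correct, and it is exactly the geometric picture behind the paper's proof. The probabilistic step, however, contains a genuine error. You claim that, conditional on the \emph{full} sequence $(\mathcal{H}^{{\bf 0}}_\ell)_{\ell\ge 1}$, the restrictions $\Xi\cap S_\ell$ are mutually independent Poisson processes on the strips. This is false: almost surely every point of $\Xi$ is one of the minimal corners of some Hammersley line (the corners of $\mathcal{H}^{{\bf 0}}_k$ are by construction exactly the points of $\Xi$ in the $k$-th layer), so conditioning on the whole family of lines determines $\Xi$ completely and leaves no randomness whatsoever in the strips. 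For the same reason your closing ``bookkeeping point'' is backwards: it is not true that a.s.\ no point of $\Xi$ lies on a Hammersley line --- on the contrary, a.s.\ every point of $\Xi$ lies on one; the Lebesgue-null argument does not apply because the lines are built from $\Xi$ itself. Proposition \ref{lem:Markovspatial} only licenses conditioning on the lines \emph{already revealed}: given $\mathcal{H}^{{\bf 0}}_\ell$, the process strictly beyond $\mathcal{H}^{{\bf 0}}_\ell$ is an unconditioned Poisson process; it says nothing about the law of $\Xi$ between two consecutive lines once the later line has also been conditioned on.

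The repair is the sequential (Markovian) exploration used in the paper: reveal $\mathcal{H}^{{\bf 0}}_1$; up to $\mathcal{H}^{{\bf 0}}_1$ the fields $\Xi$ and $\tilde{\Xi}$ coincide, the first gap band $\phi^{\bf h}(\mathcal{H}^{{\bf 0}}_1)+(0,h_1)\times(0,h_2)$ is filled by the independent field $\mathbf{Y}$, and by the spatial Markov property the part of $\Xi$ beyond $\mathcal{H}^{{\bf 0}}_1$ is still Poisson of intensity one; hence $\tilde{\Xi}$ is Poisson up to the first translated line plus its gap band, and one iterates line by line. Your strip-translation observation then serves only to identify where each successive piece lands. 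As written, though, the central conditional-independence claim would not survive scrutiny, so the proposal has a real gap rather than a cosmetic one.
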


\begin{proof}[Proof of Lemma \ref{lem:NouveauXi}]

The point process $\tilde{\Xi}$ can be discovered by the following Markovian exploration of $(0,+\infty)^2$.

From bottom-left to top-right, both point processes $\Xi$ and $\tilde{\Xi}$ coincide up to $\mathcal{H}^{{\bf 0}}_1$. 
Then, conditional to $\mathcal{H}^{{\bf 0}}_1$, the points in the area $\phi^{\bf h}(\mathcal{H}^{{\bf 0}}_1)+(0,h_1)\times(0,h_2)$ (this is the first gray region in Fig.\ref{Fig:FonctionPhi_PourLemmeInprouvable}) are also distributed as an independent homogeneous Poisson process.
This shows that, conditional to $\mathcal{H}^{{\bf 0}}_1$, $\tilde{\Xi}$ is also a Poisson process up to $\phi^{\bf h}(\mathcal{H}^{{\bf 0}}_1)+(0,h_1)\times(0,h_2)$. But now, thanks to the Markovian property of Hammersley lines applied to $\mathcal{H}^{{\bf 0}}_1$
we can reiterate the argument to show that conditional to $\mathcal{H}^{{\bf 0}}_2$, $\tilde{\Xi}$ is a Poisson process up to $\phi^{\bf h}(\mathcal{H}^{{\bf 0}}_2)+(0,h_1)\times(0,h_2)$, and so on.

\end{proof}


\begin{proof}[Proof of Theorem \ref{th:Coupling}]
We will first prove that almost surely, for every $x,t$ we have
\begin{equation}\label{eq:Coupling}
L_{(x,t)}(\Xi)= L^{{\bf h}}_{(x',t')}(\tilde{\Xi}),
\end{equation}
where $(x',t')=\phi^{\bf h}(x,t)$ \emph{i.e.}
$$
(x',t')=(x+h_1L_{(x,t)^-}(\Xi),t+h_2L_{(x,t)^-}(\Xi)).
$$

The quarter-plane $(0,+\infty)^2$ is divided in two types of regions:
\begin{enumerate}
\item The region $\mathcal{W}$ defined by the interior of $\phi^{\bf h}((0,+\infty)^2)$ (represented in white in Fig.\ref{Fig:FonctionPhi_PourLemmeInprouvable}). There are no points of $\tilde{\Xi}$ in $\mathcal{W}$.
\item The regions $\mathcal{G}_\ell=\phi^{\bf h}(\mathcal{H}^{\bf 0}_{\ell})+[0,h_1]\times[0,h_2]$, for $\ell\geq 1$ (represented in gray in Fig.\ref{Fig:FonctionPhi_PourLemmeInprouvable}). 
\end{enumerate}

Let $P=(x_1,s_1)\stackrel{{\bf 0}}{\prec} \dots \stackrel{{\bf 0}}{\prec}(y_{L_{(x,t)}},s_{L_{(x,t)}})$ be a maximizing path in $\Xi$.
Because of the dilatation, the points of $\phi^{\bf h}(P)$ have horizontal gaps $h_1$ and vertical gaps $h_2$. Therefore the path $\phi^{\bf h}(P)$ satisfies the gaps constraints and
$$
L_{(x,t)}(\Xi)\leq L^{{\bf h}}_{(x',t')}(\tilde{\Xi}).
$$
For the reverse inequality, we observe that because of the gaps constraint, an admissible path for the order $\stackrel{{\bf h}}{\prec}$ takes at most one point in each $\mathcal{G}_\ell$. Since there are $L_{(x,t)}(\Xi)$ gray regions which intersect $(0,x')\times(0,t')$, this proves that $L^{{\bf h}}_{(x',t')}(\tilde{\Xi})\leq L_{(x,t)}(\Xi)$.
Finally we have proved \eqref{eq:Coupling}.

We now conclude the proof of the theorem. Let $\gamma \geq 0$ be such that 
$$
\gamma =\sup \set{y \geq 0, L_{(x-yh_1,t-yh_2)}\ge  k+1}
$$
(with $\sup \varnothing =0$). If $\gamma >0$ we have
$$
k+1 = L_{(x-\gamma h_1,t-\gamma h_2)}=L_{(x-\gamma h_1,t-\gamma h_2)^-}+1.
$$
From \eqref{eq:Coupling} 
$$
k+1=L^{{\bf h}}_{(x-\gamma h_1+h_1 k,t-\gamma h_2+h_2 k)}=L^{{\bf h}}_{(x-\gamma h_1+h_1 k,t-\gamma h_2+h_2 k)^-}+1.
$$
By monotonicity of $L$ we deduce that
$$
\mathbb{P}( L^{{\bf h}}_{(x,t)}< k+1)=\mathbb{P}( \gamma < k).
$$
On the other hand, by definition of $\gamma$,
$$
\mathbb{P}( \gamma < k)=\mathbb{P}( L_{(x-h_1 k,t-h_2 k)}<k+1),
$$
and Theorem \ref{th:Coupling} is proved.



\end{proof}

\begin{remark}
There is a geometric interpretation of the coupling equality \eqref{eq:Coupling}: the image of a Hammersley line under mapping $\phi^{\bf h}$ is a Hammersley line as well. More precisely, for every $\ell$,
$$
\phi^{\bf h}\left(\mathcal{H}^{{\bf 0}}_\ell\right)=\mathcal{H}^{{\bf h}}_\ell,
$$
where on the left-hand side $\mathcal{H}^{{\bf 0}}_\ell$ is defined with the points of $\Xi$ and on the right-hand side, $\mathcal{H}^{{\bf h}}_\ell$ is defined with the points of $\tilde{\Xi}$. 
\end{remark}

\begin{proof}[Proof of Proposition \ref{TheoLLN_continu}]
Recall the asymptotics known for the length of the longest increasing path: 
\begin{equation*}
f(a,b):=\lim_{t\to +\infty}\frac{L_{( at,bt)}}{t}= 2\sqrt{ab}.
\end{equation*}
We fix $(a,b)\in(0,+\infty)^2$ , let $\lambda>0$ and $t$ such that $\lambda  t \in \Z_{\ge  0}$. Using Theorem \ref{th:Coupling} we obtain
$$
\mathbb{P}( L^{{\bf h}}_{(at,bt)}\le \lambda t)= \mathbb{P}( L_{(at-h_1 \lambda t,bt-h_2 \lambda t )}\le  \lambda t)
\stackrel{t\to +\infty}{\to}
\begin{cases}
0 &\text{ if }\lambda< f\left(a-h_1\lambda,b-h_2\lambda \right),\\
1 &\text{ if }\lambda> f\left(a-h_1\lambda,b-h_2\lambda \right).
\end{cases}
$$
Therefore, $\tfrac{1}{t}L^{{\bf h}}_{(at,bt)}$ converges in probability to the unique solution $\lambda$ of the equation 
\begin{equation}\label{Eq:def_fh}
\lambda=f\left(a-h_1\lambda,b-h_2\lambda \right) \quad \mbox{\emph{ i.e. }} \quad \lambda=2\sqrt{(a-h_1\lambda)(b-h_2\lambda)}.
\end{equation}
We easily check that if $h_1h_2\neq 1/4$ the solution of \eqref{Eq:def_fh} is given by 
$$
\lambda= \frac{2(ah_2+bh_1)-2\sqrt{(ah_2-bh_1)^2+ab}}{4h_1h_2-1}.
$$
If $h_1h_2=1/4$, then \eqref{Eq:def_fh} reduces to $\lambda=ab/(h_1b+h_2a)$.
\end{proof}

\subsection{Fluctuations of $L^{\bf h}(at,bt)$}
Let us now explain how the combination of Baik-Deift-Johansson's result (Theorem \ref{Prop:Flu_Ln_continu}) and Theorem \ref{th:Coupling} implies Proposition \ref{Theoflu_continu} for the fluctuations of $L^{\bf h}(at,bt)$.
\begin{proof}[Proof of Proposition \ref{Theoflu_continu}]
Using the scaling invariance of a Poisson point process under transformations which preserve the volume, in the case without gaps constraint, the distribution of $L_{(x,t)}$ only depends on the value of the product $xt$.
Thus,  we can define a family of random variables $(Z(s),s\ge 0)$ such that $Z(xt)\overset{d}{=}L_{(x,t)}$ for all $x,t\ge 0$. Theorem \ref{Prop:Flu_Ln_continu} yields that 
$$\forall c\in \R, \quad \lim_{s\to \infty}\P(Z(s^2)\le 2s+cs^{1/3})=F_{TW}(c),$$
where $F_{TW}$ is the distribution function of the Tracy-Widom distribution.

Fix now $a,b>0$ and let $\lambda=f^{\bf{h}}(a,b)$. 
Using Theorem \ref{th:Coupling}, we have that, for any $t\ge 0$ and $\beta\in \R$ such that $\lambda t+\beta t^{1/3}\in \Z_{\ge 0}$,  
\begin{eqnarray}\label{Eq:calcul_fluc}
\mathbb{P}( L^{{\bf h}}_{(at,bt)}\le\lambda t+\beta t^{1/3})&=& \mathbb{P}( L_{(t(a-h_1\lambda)-\beta h_1 t^{1/3},t(b-h_2\lambda)-\beta h_2 t^{1/3})}\le\lambda t+\beta t^{1/3}).\\ \nonumber
&=& \mathbb{P}(Z(s^2)\le \lambda t+\beta t^{1/3})
\end{eqnarray}
with $s\ge 0$ defined by
\begin{eqnarray*}
s^2&:=&(t(a-h_1\lambda)-\beta h_1 t^{1/3})(t(b-h_2\lambda)-\beta h_2 t^{1/3})\\
&=& t^2(a-\lambda h_1)(b-\lambda h_2)-t^{4/3}\beta (h_1b+h_2a-2\lambda h_1h_2)+ \mathcal{O}(t)\\
&=& t^2\frac{\lambda^2}{4}-t^{4/3}\beta (h_1b+h_2a-2\lambda h_1h_2)+ \mathcal{O}(t),
\end{eqnarray*}
where we use \eqref{Eq:def_fh} in the last line. Inverting this equality gives
\begin{equation*}
t=\frac{2}{\lambda}s+\beta \delta s^{1/3}+ \mathcal{O}(1) 
\quad \mbox{ with } \quad
\delta:=2^{4/3}\frac{h_1b+h_2a-2\lambda h_1 h_2}{\lambda^{7/3}}.
\end{equation*}
Plugging this expression of $t$  in \eqref{Eq:calcul_fluc}, we get 
\begin{eqnarray*}
\mathbb{P}( L^{{\bf h}}_{(at,bt)}\le \lambda t+\beta t^{1/3})
&=& \mathbb{P}\left(Z(s^2)\le 2s+\beta \Big(\lambda\delta+\frac{2^{1/3}}{\lambda^{1/3}}\Big) s^{1/3}+\mathcal{O}(1)\right).
\end{eqnarray*}
If we set $\sigma^{\bf h}(a,b)=\big(\lambda\delta+\frac{2^{1/3}}{\lambda^{1/3}}\big)^{-1}$ and apply the above equation with $\beta=c \sigma^{\bf h}(a,b)$, we obtain
$$\lim_{t\to \infty}\P(L^{{\bf h}}_{(at,bt)}\le \lambda t+c\sigma^{\bf h}(a,b) t^{1/3})=
\lim_{s\to \infty}\P(Z(s^2)\le 2s+cs^{1/3})=F_{TW}(c).$$
One can  check that this definition of  $\sigma^{\bf h}(a,b)$ coincides with the one given in Proposition \ref{Theoflu_continu}.
\end{proof}

\subsection{Case where $h$,$\lambda$ depend on $t$}\label{Sec:EcartQuiBouge}

In this short section we show how to use the scale-invariance of the Poisson point process to derive asymptotics in the case where gaps and intensity of points depend on $t$. For the sake of simplicity we assume that vertical and horizontal gaps are identical.

For every $t$, let ${\bf h_t}=(h_t,h_t)$ be a pair of gaps, $\lambda_t>0$ and denote by $L^{{\bf h_t},\lambda_t}_{(t,t)}$ be the length of the longest increasing path with gaps ${\bf h_t}$ when $\Xi$ is a Poisson process with intensity $\lambda_t$.

\begin{theo}\label{TheoToutDependDeT2}Let $c_t=h_t\sqrt{\lambda_t}$ and assume that $c:=\lim_{t \to \infty} c_t$ exists in $[0,+\infty]$. Then, 
\begin{itemize}
\item[(i)] If $c=0$ and  $\sqrt{\lambda_t}t\to +\infty$ then
$$
\frac{1}{\sqrt{\lambda_t}t} L^{{\bf h_t},\lambda_t}_{(at,bt)} \stackrel{\text{prob.}}{\to} f^{(0,0)}(a,b).
$$
\item[(ii)] If  $c\in (0,+\infty)$ and  $\sqrt{\lambda_t}t\to +\infty$  then
$$
\frac{1}{\sqrt{\lambda_t}t} L^{{\bf h_t},\lambda_t}_{(at,bt)} \stackrel{\text{prob.}}{\to} f^{(c,c)}(a,b).
$$
\item[(iii)] If $c=+\infty$ and  $t/h_t \to +\infty$  then
$$
\frac{h_t}{t}L^{{\bf h_t},\lambda_t}_{(at,bt)} \stackrel{\text{prob.}}{\to} \min\{a,b\}.
$$
\end{itemize}
\end{theo}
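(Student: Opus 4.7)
The plan is to combine the scale invariance of the Poisson point process with the coupling identity of Theorem~\ref{th:Coupling}, thereby reducing the three cases to the limit laws of Proposition~\ref{TheoLLN_continu} and Theorem~\ref{Prop:ConcentrationLn_continu}.

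First, the volume-preserving dilation $(y,s)\mapsto(\sqrt{\lambda_t}\,y,\sqrt{\lambda_t}\,s)$ sends a homogeneous Poisson process of intensity $\lambda_t$ to one of intensity $1$, multiplies gaps by $\sqrt{\lambda_t}$, and turns $[0,at]\times[0,bt]$ into $[0,aT_t]\times[0,bT_t]$ with $T_t:=t\sqrt{\lambda_t}$. Therefore
\begin{equation*}
L^{\mathbf{h}_t,\lambda_t}_{(at,bt)}\stackrel{d}{=}L^{(c_t,c_t),1}_{(aT_t,bT_t)},
\end{equation*}
and it suffices to analyse the right-hand side.

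For cases (i) and (ii), I would use that $c\mapsto L^{(c,c),1}$ is pathwise non-increasing (larger gaps kill admissible paths). For any $\varepsilon>0$ and $t$ large enough one has $|c_t-c|<\varepsilon$, whence the sandwich
\begin{equation*}
L^{(c+\varepsilon,c+\varepsilon),1}_{(aT_t,bT_t)}\le L^{(c_t,c_t),1}_{(aT_t,bT_t)}\le L^{((c-\varepsilon)_+,(c-\varepsilon)_+),1}_{(aT_t,bT_t)}.
\end{equation*}
Dividing by $T_t\to+\infty$ and applying Proposition~\ref{TheoLLN_continu} to each bound gives a.s.
\begin{equation*}
f^{(c+\varepsilon,c+\varepsilon)}(a,b)\le \liminf_{t\to\infty}\frac{L^{(c_t,c_t),1}_{(aT_t,bT_t)}}{T_t}\le\limsup_{t\to\infty}\frac{L^{(c_t,c_t),1}_{(aT_t,bT_t)}}{T_t}\le f^{((c-\varepsilon)_+,(c-\varepsilon)_+)}(a,b).
\end{equation*}
Since the explicit formula \eqref{eq:LGN_Continu} shows that $c\mapsto f^{(c,c)}(a,b)$ is continuous on $[0,+\infty)$, letting $\varepsilon\to 0$ settles both (i) (where $c=0$) and (ii) (where $c\in(0,+\infty)$); convergence in distribution to a constant then transfers back to the original side as convergence in probability.

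Case (iii) requires a different normalisation, since $h_tL/t=c_tL/T_t$ and $c_t\to+\infty$. Here I would apply Theorem~\ref{th:Coupling} directly:
\begin{equation*}
\mathbb{P}\!\left(L^{(c_t,c_t),1}_{(aT_t,bT_t)}\le k\right)=\mathbb{P}\!\left(L_{(aT_t-c_tk,\,bT_t-c_tk)}\le k\right).
\end{equation*}
With the ansatz $k=\mu T_t/c_t$ for fixed $\mu>0$, one has $c_tk=\mu T_t$, so the right-hand side equals $\mathbb{P}(L_{((a-\mu)T_t,(b-\mu)T_t)}\le \mu T_t/c_t)$. If $\mu<\min\{a,b\}$, Theorem~\ref{Prop:ConcentrationLn_continu} gives $L_{((a-\mu)T_t,(b-\mu)T_t)}/T_t\to 2\sqrt{(a-\mu)(b-\mu)}>0$, while the threshold $\mu/c_t$ tends to $0$, so the probability vanishes; if $\mu>\min\{a,b\}$, one coordinate is negative and $L=0\le k$ by convention, so the probability equals $1$. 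This yields $c_t L^{(c_t,c_t),1}_{(aT_t,bT_t)}/T_t\to\min\{a,b\}$ in probability, hence (iii). The one subtle point, and the main obstacle, is guessing this intermediate scale $k\sim T_t/c_t$: the scaling $k\sim\lambda T_t$ used in (i)--(ii) yields nothing once $c_t\to+\infty$, because $c_tk$ then exceeds $\min\{a,b\}\,T_t$ for every $\lambda>0$ and one lands in the trivial regime where $L=0$.
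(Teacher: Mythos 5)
Your proof is correct. For parts (i) and (ii) it is essentially the paper's argument: rescale by $\sqrt{\lambda_t}$ to reduce to unit intensity and gap $c_t$ in the box of side $T_t=t\sqrt{\lambda_t}$, sandwich $c_t$ between $c\pm\varepsilon$ using the pathwise monotonicity of $L^{\bf h}$ in $\bf h$, apply Proposition \ref{TheoLLN_continu} to the two bounds, and conclude by continuity of $c\mapsto f^{(c,c)}(a,b)$. (Your additive window $|c_t-c|<\varepsilon$ is in fact cleaner at $c=0$, where the paper's multiplicative window $[c(1-\varepsilon),c(1+\varepsilon)]$ degenerates.) Part (iii) is where you genuinely diverge. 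The paper obtains the upper bound from the deterministic inequality $L^{{\bf h_t},\lambda_t}_{(at,bt)}\le \min\{a,b\}\,t/h_t$, and the lower bound by decreasing the intensity to $\tilde\lambda_t=(A/h_t)^2$ so that the rescaled gap becomes the constant $A$, invoking part (ii), and then letting $A\to\infty$ via $\lim_{A\to\infty}Af^{(A,A)}(a,b)=\min\{a,b\}$. You instead feed the coupling identity of Theorem \ref{th:Coupling} directly into the rescaled model at the intermediate scale $k\sim\mu t/h_t$ and read off the $0$--$1$ dichotomy at $\mu=\min\{a,b\}$ from Theorem \ref{Prop:ConcentrationLn_continu}. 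Both routes work; yours avoids the detour through part (ii) and the limit $A\to\infty$, at the price of having to guess the correct scale for $k$, which you do and correctly identify as the crux. Two cosmetic points: $k$ in Theorem \ref{th:Coupling} should be an integer, so take $k=\lfloor\mu t/h_t\rfloor$; since then $c_tk=\mu T_t+O(c_t)$ and $c_t=o(T_t)$ (because $T_t/c_t=t/h_t\to+\infty$ by hypothesis), nothing changes. And when you apply the law of large numbers to $L_{((a-\mu)T_t,(b-\mu)T_t)}$ along the family $T_t$, a one-line sandwich by monotonicity in the domain justifies the convergence along this (not necessarily monotone) family.
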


\begin{proof}
We observe that by the scaling invariance of the Poisson process we have
$$
L^{{\bf h_t},\lambda_t}_{(at,bt)} \stackrel{\text{(d)}}{=} L^{{\bf h_t\sqrt{\lambda_t}},1}_{(at\sqrt{\lambda_t},bt\sqrt{\lambda_t})}.
$$
Assume first that $c=\lim_{t \to \infty} h_t\sqrt{\lambda_t} <+\infty$. 
Let $\varepsilon\in (0,1)$ and $T$ such that $c(1-\varepsilon)\le h_t\sqrt{\lambda_t}\le c(1+\varepsilon)$ for $t\ge T$. Since $L^{\bf h}_{(at,bt)}$ is a non-increasing function in $\bf h$, we get, for $t\ge T$, the stochastic domination:
$$\frac{1}{t\sqrt{\lambda_t}}L^{{\bf c(1+\varepsilon)},1}_{(at\sqrt{\lambda_t},bt\sqrt{\lambda_t})} \preccurlyeq  \frac{1}{t\sqrt{\lambda_t}}L^{{\bf h_t},\lambda_t}_{(at,bt)} \preccurlyeq 
\frac{1}{t\sqrt{\lambda_t}}L^{{ \bf c(1-\varepsilon)},1}_{(at\sqrt{\lambda_t},bt\sqrt{\lambda_t})} 
$$
Assuming that $t\sqrt{\lambda_t}$ tends to infinity, the left-hand side tends to $f^{(c(1+\varepsilon),c(1+\varepsilon))}(a,b)$
whereas the right-hand side tends to $f^{(c(1-\varepsilon),c(1-\varepsilon))}(a,b)$. We conclude by continuity in $c$ of the expression of $f^{(c,c)}(a,b)$. 

Assume now that $c=\lim_{t \to \infty} h_t\sqrt{\lambda_t}=+\infty$. First, by definition of gaps, $L_{(at,bt)}^{{\bf h_t},\lambda}\le  \min \{a,b\}\times t/h_t$ a.s. This gives the upper bound in (iii).\\
For the lower bound, let $A>0$ and $T$ such that $ h_t\sqrt{\lambda_t}\ge A$ for $t\ge T$. For $t\ge T$, 
$\lambda_t\ge (A/h_t)^2$, thus using the monotonicity of a Poisson point process with respect to its intensity, we have
\begin{equation}\label{eq:MinoQuandCaDepend}
\frac{h_t}{t}L^{{\bf h_t},\lambda_t}_{(at,bt)} \succcurlyeq 
\frac{h_t}{t}L^{{\bf h_t},(A/h_t)^2}_{(at,bt)}.
\end{equation}
Using (ii)  with $\tilde{\lambda}_t=(A/h_t)^2$, we get, if $t/h_t\to +\infty$, the following convergence in probability:
$$
\lim_{t\to \infty} \frac{h_t}{At}L^{{\bf h_t},(A/h_t)^2}_{(t,t)} = f^{(A,A)}(a,b).
$$
Observe that $\lim_{A\to +\infty} Af^{(A,A)}(a,b) =\frac{a+b-|a-b|}{2}= \min \{a,b\}$, so we obtain the lower bound by letting $A$ tend to infinity in \eqref{eq:MinoQuandCaDepend}.

\end{proof}

\section{Proofs in the discrete settings}

\begin{figure}
\begin{center}
\includegraphics[width=10cm]{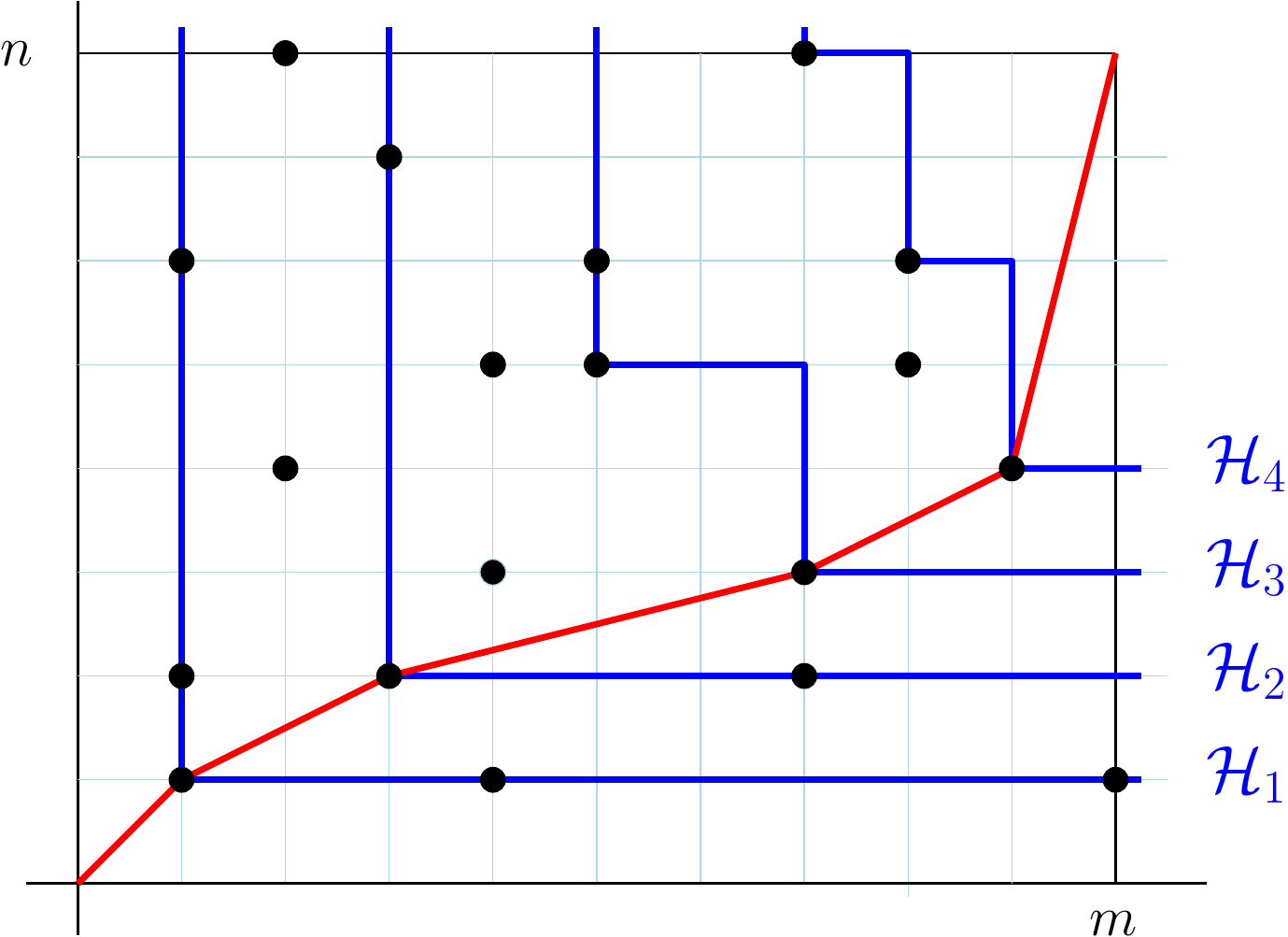}
\end{center}
\caption{An example with ${\bf h}=(2,1)$ for the same realization of $\Xi$ as that of Fig.\ref{Fig:SansChemin}. In every unit square $(i,j)$ we write the value of $\Lr^{(2,1)}_{(i,j)}$, the four Hammersley lines are drawn in blue. }
\label{Fig:DefinitionLignesHammersley_Gap2}
\end{figure}

\subsection{Proof of Theorem \ref{th:DiscreteCoupling}}

The main task of this Section is to prove the coupling between $\Lr^{{\bf h}}$ and $T$ which leads to the identity of Theorem \ref{th:DiscreteCoupling}. For the sake of clarity we first exhibit a coupling between $\Lr^{{\bf h}}$ and $\Lr^{(1,1)}$. 
\begin{lem}\label{lem:Coupling_h_11}
Let ${\bf h}=(h_1,h_2)$ with $(h_1,h_2)\neq (0,0)$.
For every $m,n\geq 0$, and every $k \geq 0$,
$$
\mathbb{P}( \Lr^{{\bf h}}_{(m,n)}\le k)= \mathbb{P}( \Lr^{(1,1)}_{(m-(h_1-1)k,n-(h_2-1)k)}\le k).
$$
\end{lem}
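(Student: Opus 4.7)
The plan is to adapt the dilation-coupling argument from the continuous proof of Theorem~\ref{th:Coupling} to the discrete setting, with $\Lr^{(1,1)}$ playing the role that $L$ played in the continuous case. The crucial observation is that a strict increase on $\mathbb{Z}^2$ already has a built-in unit gap, so converting an ${\bf h}$-admissible chain to a $(1,1)$-admissible chain amounts to removing $h_1-1$ columns and $h_2-1$ rows between consecutive points; equivalently, Hammersley lines should be shifted by $(h_1-1,h_2-1)$ per level rather than by $(h_1,h_2)$.

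I would first define discrete Hammersley lines $\mathcal{H}^{(1,1)}_1,\mathcal{H}^{(1,1)}_2,\dots$ as the staircase frontiers of the level sets $\{(i,j):\Lr^{(1,1)}_{(i,j)}\ge\ell\}$ (cf.~Fig.~\ref{Fig:DefinitionLignesHammersley_Gap2}), and check the direct analogues of Lemma~\ref{lem:NbLignes} (the number of such lines hitting $[1,m]\times[1,n]$ equals $\Lr^{(1,1)}_{(m,n)}$) and of Proposition~\ref{lem:Markovspatial} (spatial Markov property). Both facts are routine since $\mathcal{H}^{(1,1)}_\ell$ depends only on the Bernoulli field in its southwest quadrant.

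Next I would introduce the discrete dilation
\[
\phi^{\bf h}(i,j) \;=\; \bigl(i+(h_1-1)\Lr^{(1,1)}_{(i,j)^-},\ j+(h_2-1)\Lr^{(1,1)}_{(i,j)^-}\bigr),
\]
where $\Lr^{(1,1)}_{(i,j)^-}$ counts the lines lying strictly southwest of $(i,j)$. In the principal case $h_1,h_2\ge 1$, $\phi^{\bf h}$ is injective and dilative, leaving a strip of width $h_1-1$ and height $h_2-1$ between successive dilated lines. Filling each strip with fresh independent Bernoulli$(p)$ variables and applying the spatial Markov property inductively (exactly as in Lemma~\ref{lem:NouveauXi}) yields a field $\tilde\Xi$ which is i.i.d.\ Bernoulli$(p)$ on the enlarged grid. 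The coupling identity
\[
\Lr^{(1,1)}_{(m,n)}(\Xi)=\Lr^{\bf h}_{\phi^{\bf h}(m,n)}(\tilde\Xi)
\]
then follows by the same two-sided argument as in the continuous case: $\phi^{\bf h}$ sends $(1,1)$-maximizing paths to ${\bf h}$-admissible ones (giving $\le$), and any ${\bf h}$-admissible path in $\tilde\Xi$ contains at most one cell from each inserted strip — since the strip dimensions $h_1-1$ and $h_2-1$ are strictly smaller than the ${\bf h}$-gap constraint — so its length is bounded by the number of Hammersley lines crossed (giving $\ge$). The lemma then follows from the coupling identity by exactly the $\gamma$-argument used at the end of the proof of Theorem~\ref{th:Coupling}, with $\gamma=\sup\{y\ge 0:\Lr^{(1,1)}_{(m-y(h_1-1),\,n-y(h_2-1))}\ge k+1\}$.

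The main obstacle will be the degenerate cases $h_1=0$ or $h_2=0$ (needed, e.g., for the non-decreasing-paths application $\bf h=(1,0)$), in which $(h_1-1,h_2-1)$ has a negative coordinate so $\phi^{\bf h}$ is compressive rather than expansive in that direction. Those cases require reversing the direction of the dilation — mapping an ${\bf h}$-configuration on the smaller grid into a $(1,1)$-configuration on the larger one rather than the reverse — and verifying separately that the cells erased by this reverse map carry, once re-filled, an independent Bernoulli$(p)$ field via the same spatial-Markov induction. The hypothesis ${\bf h}\neq(0,0)$ is used precisely to ensure that at least one of the two shifts $h_i-1$ is nonnegative (i.e.~the construction makes sense in at least one direction), so that the Hammersley-line machinery retains its meaning.
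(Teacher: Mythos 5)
Your treatment of the main case $h_1,h_2\ge 1$ is essentially the paper's proof: the same discrete Hammersley lines, the same dilation $\phi^{\bf h}(m,n)=(m+(h_1-1)\Lr^{(1,1)}_{(m-1,n-1)},\,n+(h_2-1)\Lr^{(1,1)}_{(m-1,n-1)})$, the same re-filling of the non-image cells by fresh Bernoulli variables justified inductively by the spatial Markov property, the same two-sided comparison (with the caveat that the ``at most one point'' claim must be made for the whole thickened region $\phi^{\bf h}(\mathcal{H}^{(1,1)}_\ell)+\set{0,\dots,h_1-1}\times\set{0,\dots,h_2-1}$, not only for the inserted strip), and the same $\gamma$-argument to convert the pathwise identity into the stated equality of distribution functions. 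That part is fine.

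The gap is in the degenerate case $h_1h_2=0$, say ${\bf h}=(h,0)$ with $h\ge 1$, which the lemma must cover. Your proposed fix --- ``reverse the direction of the dilation'' --- does not work as stated: for $h\ge 2$ the shift vector per level is $(h-1,-1)$, expansive in the first coordinate and compressive in the second, and reversing it gives $(-(h-1),1)$, which is still mixed. No single map in either direction is a genuine dilation, so the re-filling argument cannot be run in one shot; moreover, in the compressive coordinate you are not re-filling empty cells with fresh randomness but collapsing several Bernoulli cells onto one, so you must specify which value survives and then prove that the collapsed field is still i.i.d.\ Bernoulli$(p)$ --- a separate verification, not the same spatial-Markov induction. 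The paper resolves this by factoring through the intermediate gap $(h,1)$: the step from $(1,1)$ to $(h,1)$ is purely expansive and is exactly your main case, while the step from $(h,1)$ to $(h,0)$ is handled by the purely vertical compression $\psi^{h}(m,n)=(m,\,n-\Lr^{(h,1)}_{(m-1,n-1)})$, setting $\tilde{\Xi}_{m,n'}=\Xi_{m,n+k}$ at the highest antecedent. Chaining the two identities gives $\P(\Lr^{(h,0)}_{(m,n)}\le k)=\P(\Lr^{(h,1)}_{(m,n+k)}\le k)=\P(\Lr^{(1,1)}_{(m-(h-1)k,\,n+k)}\le k)$, which is the claimed formula. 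You should add this two-step factorization (and its symmetric counterpart for $h_1=0$); as written, your argument only proves the lemma for $h_1,h_2\ge 1$.
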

\subsubsection{Proof of Lemma \ref{lem:Coupling_h_11}: the case $h_1>0,h_2>0$}

In the discrete settings and if $h_1>0,h_2>0$, the proof of Lemma \ref{lem:Coupling_h_11} is almost identical to that of Theorem \ref{th:Coupling}. We only explain how to change the definitions of the Hammersley lines and the function $\phi^{\bf h}$.

The definition of the Hammersley lines is identical to the continuous case (an example is provided in Fig.\ref{Fig:DefinitionLignesHammersley_Gap2}): the broken line $\mathcal{H}^{\bf h}_1$ is the shortest path made of vertical and horizontal straight lines whose minimal points for $\stackrel{{\bf 0}}{\prec}$ are exactly  the minimal points of $\Xi$ for  $\stackrel{{\bf 0}}{\prec}$.
The  line $\mathcal{H}^{\bf h}_2$ is defined as follows: we remove the points of $\mathcal{H}_1^{\bf h}+\set{0,1,\dots,h_1-1}\times\set{0,1,\dots,h_2-1}$ and reiterate the procedure: $\mathcal{H}^{\bf h}_2$ is the shortest path made of vertical and horizontal straight lines whose minimal points for $\stackrel{{\bf 0}}{\prec}$ are exactly  the minimal points of $\Xi\setminus \left(\mathcal{H}^{\bf h}_1+\set{0,1,\dots,h_1-1}\times\set{0,1,\dots,h_2-1}\right)$  for  $\stackrel{{\bf 0}}{\prec}$.
Inductively we define $\mathcal{H}^{\bf h}_3,\mathcal{H}^{\bf h}_4,\dots$ in the same way.

The function $\phi^{\bf h}$ has to be replaced by its discrete counterpart:
\begin{equation}\label{eq:phidiscret}
\begin{array}{r c c c}
\phi^{\bf h}: & (\bbZ_{>0})^2 &   \to   &    (\bbZ_{>0})^2 \\
      & (m,n)         & \mapsto & (m+(h_1-1)\Lr^{(1,1)}_{(m-1,n-1)},n+(h_2-1)\Lr^{(1,1)}_{(m-1,n-1)}). 
\end{array}
\end{equation}
We define a new set of points $\tilde{\Xi}$ by 
$$
\begin{cases}
\tilde{\Xi}_{i',j'}&= \Xi_{i,j} \text{ if }(i',j')\in\mathrm{Image}(\phi^{\bf h}) \text{ and }\phi^{\bf h}(i,j)=(i',j'),\\
\tilde{\Xi}_{i',j'}&= \mathbf{Y}_{i',j'} \text{ if }(i',j')\notin\mathrm{Image}(\phi^{\bf h}),
\end{cases}
$$
where $(\mathbf{Y}_{i',j'})_{i,j\ge 1}$ are independent Bernoulli random variables with mean $p$ (see an example in Figure \ref{Fig:FonctionPhi3-2}).
In the same manner as in the continuous settings, we prove that  $(\tilde{\Xi}_{i,j})_{i,j\ge 1}$ are i.i.d Bernoulli random variables with mean $p$ and 
for every $m,n\ge 1$ we have
$$
\Lr^{(1,1)}_{(m,n)}(\Xi)= \Lr^{{\bf h}}_{(m',n')}(\tilde{\Xi}),
$$
where $(m',n')=\phi^{ {\bf h}}(m,n)=(m+(h_1-1)\Lr^{(1,1)}_{(m-1,n-1)},n+(h_2-1)\Lr^{(1,1)}_{(m-1,n-1)})$.
We deduce then Lemma \ref{lem:Coupling_h_11} for $h_1h_2>0$ in the same manner as in  Theorem \ref{th:Coupling}.

\begin{figure}
\begin{center}
\includegraphics[width=14cm]{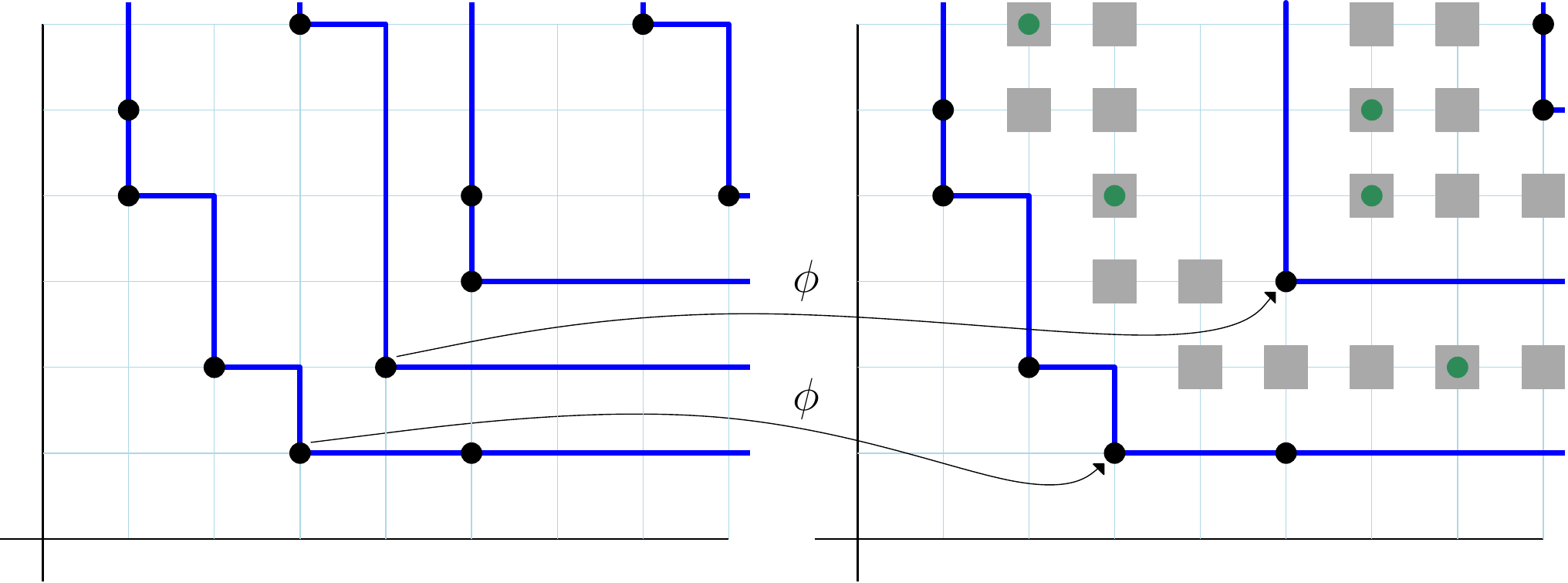}
\end{center}
\caption{An example of the function $\phi^{\bf h}$ for $(m,n)=(8,6)$ and ${\bf h}=(3,2)$. The points of the quarter-plane indicated by small gray squares are not in the image of $\phi^{\bf h}$. The values of $\tilde{\Xi}$ at these points are independent of $\Xi$. 
}
\label{Fig:FonctionPhi3-2}
\end{figure}


\subsubsection{Proof of Lemma \ref{lem:Coupling_h_11}: the case $h_1>0,h_2=0$}

As in the previous section, we can exhibit a coupling between  $\Lr^{(h_1,0)}$ and $\Lr^{(1,1)}$ which  shows that Lemma \ref{lem:Coupling_h_11} also holds for ${\bf h}=(h_1,0)$.
However, some change must be made compared to the case $h_1 h_2 >0$ since the function 
 $\phi^{\bf h}$ defined in \eqref{eq:phidiscret} is no more a dilation. 

To make the exposition clearer, it is more convenient to explain the coupling between the model with gap $(h,0)$ with the one with gap $(h,1)$. Thus, let us consider a Bernoulli  field $\Xi$ on  $(\bbZ_{>0})^2$ and construct the associated random variables $\Lr^{(h,1)}_{(m,n)}$.
Define the function $\psi^{ h}$ by
$$
\begin{array}{r c c c}
\psi^{h}: & (\bbZ_{>0})^2 &   \to   &    (\bbZ_{>0})^2 \\
      & (m,n)         & \mapsto & (m,n-\Lr^{(h,1)}_{(m-1,n-1)}). 
\end{array}
$$
Contrary to $\phi^{\bf h}$, the function $\psi^{ h}$ is surjective but no more injective. More precisely, for any $(m,n')\in  (\bbZ_{>0})^2 $, there exist $n\ge 1$ and $k\ge 0$ such that 
$$(\psi^{ h})^{-1}(m,n')=\{(m,n),(m,n+1),\ldots,(m,n+k)\}.$$
We define now the new set of points $\tilde{\Xi}$ by
$$\tilde{\Xi}_{m,n'}=\Xi_{m,n+k} \mbox{ where } (\psi^{ h})^{-1}(m,n')=\{(m,n),(m,n+1),\ldots,(m,n+k)\}.$$

\begin{figure}
\begin{center}
\includegraphics[width=14cm]{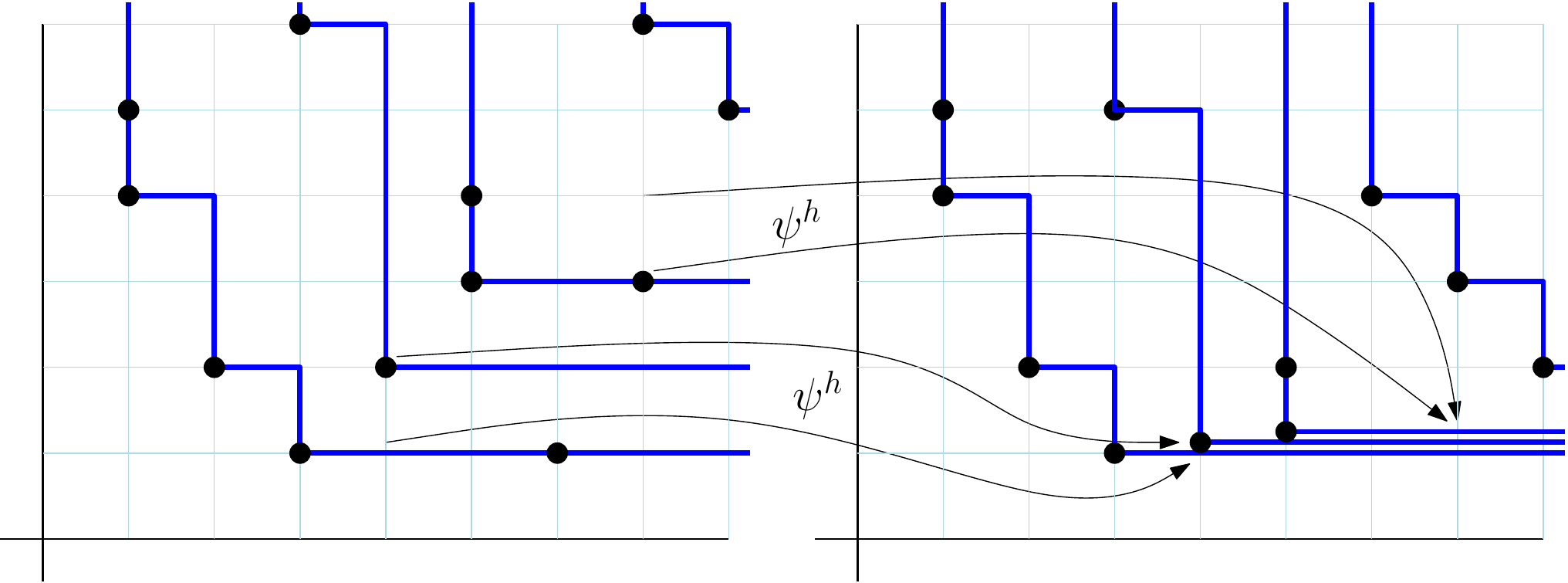}
\end{center}
\caption{An example of the function $\psi^{ h}$ and the definition of set of points $\tilde{\Xi}$ for ${\bf h}=(1,0)$.  There is a point in $\tilde{\Xi}$ at $(i',j')$ i.f.f. there is one in  $\Xi$ at its highest antecedent by $\psi^h$. 
}
\label{Fig:FonctionPhi3-0}
\end{figure}
Again, one can prove that the random variables $\tilde{\Xi}=(\tilde{\Xi}_{i,j})_{i,j\ge 1}$ are i.i.d. Bernoulli random variables with mean $p$ and 
for every $m,n\ge 1$ we have
$$
\Lr_{(m,n)}^{(h,1)}(\Xi)= \Lr^{(h,0)}_{(m',n')}(\tilde{\Xi}),
$$
where $(m,n')=\psi^{ h}(m,n)=(m,n-\Lr^{(h,1)}_{(m-1,n-1)})$.
Then, with the same argument as in the proof of Theorem \ref{th:Coupling}, we get that, for every $m,n \in \bbZ_{\geq 0}$, and every $k \in \bbZ_{\geq 0}$,
$$
\mathbb{P}( \Lr^{(h,0)}_{(m,n)}\le k)= \mathbb{P}( \Lr^{(h,1)}_{(m,n+k)}\le k)= \mathbb{P}( \Lr^{(1,1)}_{(m-(h-1)k,n+k)}\le k).
$$

\subsubsection{Proof of Theorem \ref{th:DiscreteCoupling}: coupling with $T$}\label{Sec:CouplageLPP}

We conclude the proof of Theorem \ref{th:DiscreteCoupling} with our last coupling between $\Lr_{(m,n)}^{(1,1)}$ and $T_{(m',n')}$, for some $(m',n')$.
As already said, this coupling already appeared in \cite{Dhar98,GeorgiouOrtmann,MajumdarNechaevEARMSA}.

Let us consider a Bernoulli  field $\Xi$ on  $(\bbZ_{>0})^2$ and construct the associated random variables $\Lr^{(1,1)}_{(m,n)}$ associated to the gaps $(1,1)$. Formally, in the case ${\bf h}=(0,0)$, the function $\phi^{\bf h}$ defined in \eqref{eq:phidiscret} becomes
\begin{equation}\label{eq:phidiscret2}
\begin{array}{r c c c}
\phi^{\bf 0}: & (\bbZ_{>0})^2 &   \to   &    (\bbZ_{>0})^2 \\
      & (m,n)         & \mapsto & (m-\Lr_{(m-1,n-1)},n-\Lr_{(m-1,n-1)}). 
\end{array}
\end{equation}
As in the previous case ${\bf h}=(h,0)$, the function $\phi^{\bf 0}$ is surjective but not injective. More precisely, for any $(m',n')\in  (\bbZ_{>0})^2 $, there exist $n,m\ge 1$ and $k\ge 0$ such that 
$$(\phi^{\bf 0})^{-1}(m',n')=\{(m,n),(m+1,n+1),\ldots,(m+k,n+k)\}.$$
We first define a new collection of  random variables $\hat{\Xi}=\{\hat{\Xi}_{i,j},i,j\ge 1\}\in \{0,1\}^{(\Z_{> 0})^2}$ by
$$
\begin{cases}
\hat{\Xi}_{m,n}&= \Xi_{m,n}=1 \text{ if } (m,n) \mbox{ is a minimal point of some } \mathcal{H}^{(1,1)}_\ell \\
\hat{\Xi}_{m,n}&= 0 \text{ otherwise,}
\end{cases}
$$
and 
we define now the family  of random variables $\tilde{\Xi}=\{\tilde{\Xi}_{i,j},i,j\ge 1\}\in (\Z_{\ge 0})^{(\Z_{> 0})^2}$ by
$$\tilde{\Xi}_{m',n'}=\sum_{(m,n)\in (\phi^{\bf 0})^{-1}(m',n')}\hat{\Xi}_{m,n}.$$ 
\begin{figure}
\begin{center}
\includegraphics[width=14cm]{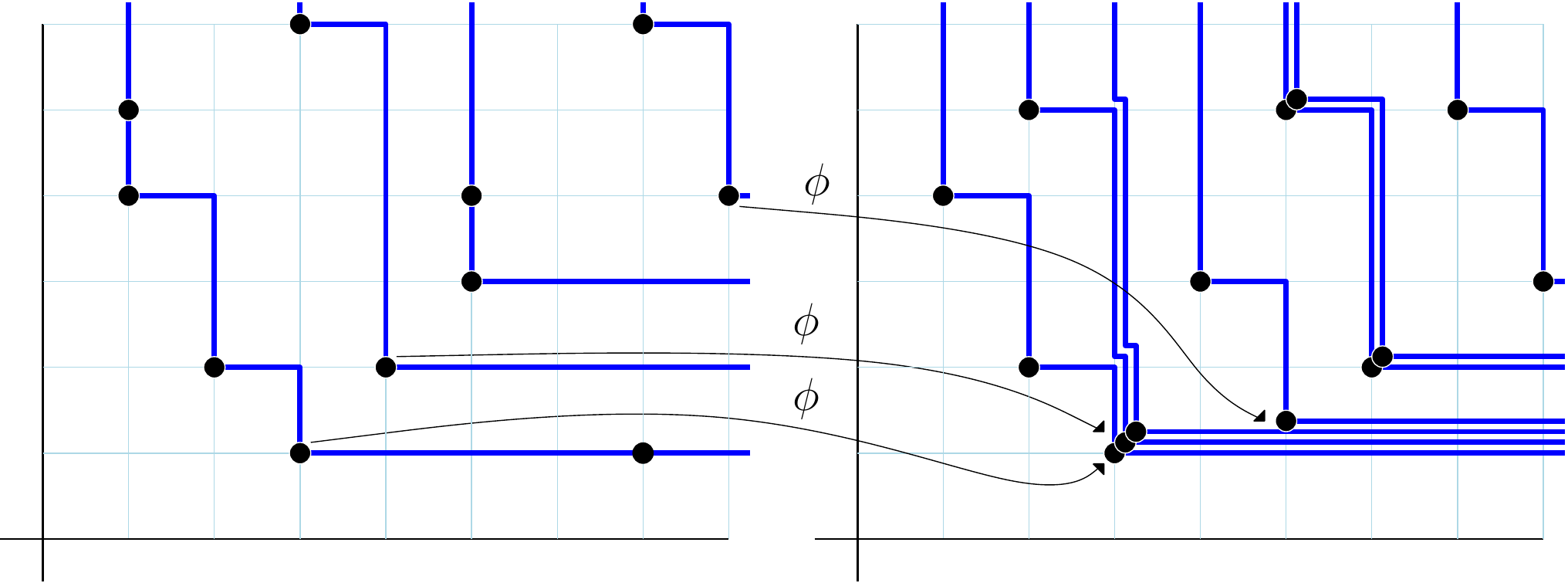}
\end{center}
\caption{An example of the function $\phi^{\bf 0}$. Left: a realization of Hammersley lines $\mathcal{H}^{(1,1)}_\ell$. Right: The associated realization of last-passage percolation with geometric weights. We have $\tilde{\Xi}_{3,1}=\hat{\Xi}_{3,1}+\hat{\Xi}_{4,2}+\hat{\Xi}_{5,3}+\hat{\Xi}_{6,4}=1+1+1+0=3$.
}
\label{Fig:FonctionPhi0-0}
\end{figure}
For every $m',n'$ we have $\tilde{\Xi}_{(m',n')}=k$ if 
$$
\hat{\Xi}_{(m,n)}=\hat{\Xi}_{(m+1,n+1)}= \dots =\hat{\Xi}_{(m+k-1,n+k-1)}=1,\quad \hat{\Xi}_{(m+k,n+k)}=0,
$$
which occurs with probability $p^k(1-p)$. Therefore we can show that $\tilde{\Xi}$ is a family of i.i.d. geometric random variables:
$\P(\tilde{\Xi}_{m,n}=k)=p^k(1-p)$ for $k\ge 0$.

For $m,n\ge 1$, recall the notation
\begin{equation*}
T_{(m,n)}(\tilde{\Xi})= 
\max \{ \sum_{(i,j)\in P} \tilde{\Xi}_{i,j} \; ; \; P\in \mathcal{P}_{m,n}\},
\end{equation*}
where $\mathcal{P}_{m,n}$ denotes the set of paths from $(1,1)$ to $(m,n)$ taking only North and East steps. 
With the same arguments of the previous cases one can prove that for every $m,n\ge 1$ we have
$$
\Lr_{(m,n)}^{(1,1)}(\Xi)= T_{(m',n')}(\tilde{\Xi}),
$$
where $(m',n')=\phi^{{\bf 0}}(m,n)=(m-\Lr^{(1,1)}_{(m-1,n-1)},n-\Lr^{(1,1)}_{(m-1,n-1)})$.

We deduce that for every $m,n \ge 0$, and every $k \ge 0$,
$$
\mathbb{P}( T_{(m,n)}\le k)= \mathbb{P}( \Lr^{(1,1)}_{(m+k,n+k)}\le k),
$$
which is  in fact equivalent to Eq.(4.1) in \cite{GeorgiouOrtmann}. Combining this equality with Lemma \ref{lem:Coupling_h_11} yields Theorem \ref{th:DiscreteCoupling}.


\subsection{Proof of the limiting shape: Proposition \ref{TheoLLN}}

\begin{proof}[Proof of Proposition \ref{TheoLLN}]
We fix $(a,b)\in(0,+\infty)^2$ , let $\lambda>0$ and $n$ such that $\lambda  n \in \Z_{\ge  0}$ and $\lambda\le \lambda_0:= \min(a/h_1,b/h_2)$. This last condition implies in particular that $(an-h_1\lambda n)$ and $(bn-h_2\lambda n)$ are non negative. Note also that, due to the gap constraint, we have $\Lr^{{\bf h}}_{(an,bn)}\le \lambda_0n$ a.s.
  Combining Theorem \ref{th:DiscreteCoupling} and Theorem  \ref{Th:JockuschProppShor}, we have
\begin{align*}
\mathbb{P}( \Lr^{{\bf h}}_{(an,bn)}\le\lambda n)= &\ \mathbb{P}( T_{(an-h_1\lambda n,bn-h_2\lambda n)} \le \lambda n)
\stackrel{n\to +\infty}{\to}
&\begin{cases}
0 &\text{ if }\lambda< g\left(a-h_1\lambda,b-h_2\lambda \right),\\
1 &\text{ if }\lambda> g\left(a-h_1\lambda,b-h_2\lambda \right)
\end{cases}
\end{align*}
where $g$ is defined in \eqref{eq:gTASEP}.
Therefore, $\tfrac{1}{n}\Lr^{{\bf h}}_{(an,bn)}$ converges in probability to
$$g^{\bf h}(a,b):=\sup\{\lambda\le \lambda_0, \lambda< g\left(a-h_1\lambda,b-h_2\lambda \right)\}.$$
Note that in \eqref{eq:gTASEP}, $g$ is only defined on $(\R_+^*)^2$ but one can  extend $g$ on $(\R_+)^2$ by continuity so that $g\left(a-h_1\lambda_0,b-h_2\lambda_0 \right)$ is well defined.
Two cases can occur: 
\begin{itemize}
\item either 
\begin{equation*}
\lambda_0\ge g\left(a-h_1\lambda_0,b-h_2\lambda_0 \right),
\end{equation*}
and  the equation 
\begin{equation}\label{Eq:def_fh_discret}
\lambda= g\left(a-h_1\lambda,b-h_2\lambda \right),
\end{equation}
has a solution which is necessarily unique since the right hand side of  \eqref{Eq:def_fh_discret} decreases with respect to $\lambda$. Then $\tfrac{1}{n}\Lr^{{\bf h}}_{(an,bn)}$ converges to this unique solution.
\item Or 
\begin{equation*}
\lambda_0< g\left(a-h_1\lambda_0,b-h_2\lambda_0 \right),
\end{equation*}
and in this case,  $\tfrac{1}{n}\Lr^{{\bf h}}_{(an,bn)}$ converges $\lambda_0=\min(a/h_1,b/h_2).$
\end{itemize}
Using the expression of $g$ given in \eqref{eq:gTASEP}, one can check that $\lambda_0\ge g\left(a-h_1\lambda_0,b-h_2\lambda_0 \right)$ i.f.f. 
 $\frac{h_1}{(h_2-1)+1/p}\le\frac{a}{b}\le\frac{h_1-1+1/p}{h_2}$
\end{proof}
\subsection{Fluctuations of $\Lr_{(an,bn)}^{\bf h}$}
%
Johansson \cite{Johansson1} has computed the fluctuations of $T_{(\lf an\rf,\lf bn\rf)}$ around its mean:
\begin{theo}[Cube root fluctuations (Johansson \cite{Johansson1}, Theorem 1.2)]\label{Prop:FluctuationsJohansson}
For every $a,b>0$ and $x\in \R$, we have 
\begin{equation*}
\lim_{n\to \infty}\mathbb{P}\left(\frac{T_{(\lf an\rf,\lf bn\rf)} -ng(a,b)}{\sigma(a,b)n^{1/3}}\le x \right)
=F_{TW}(x), 
\end{equation*}
where $F_{TW}$ is the distribution function of the Tracy-Widom distribution,
and
$$\sigma(a,b)=\frac{p^{1/6}}{1-p}(ab)^{-1/6} (\sqrt{a}+\sqrt{pb})^{2/3}(\sqrt{b}+\sqrt{pa})^{2/3}.$$
\end{theo}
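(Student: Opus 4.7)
The plan is to reduce the statement to an edge asymptotic analysis of a determinantal point process built from Meixner orthogonal polynomials. The starting point is the RSK correspondence applied to the $m \times n$ array of i.i.d.\ geometric weights $\Xi'_{i,j}$: this produces a random pair of semistandard Young tableaux of common shape $\lambda$, and one has the classical identity $T_{(m,n)} = \lambda_1$. A Cauchy identity for Schur functions then shows that $\lambda$ follows a Schur measure specialised at $m$ and $n$ copies of $\sqrt{p}$, and after the translation $\mu_i \mapsto \mu_i + n - i$ the shifted rows form the \emph{Meixner ensemble}: a determinantal point process on $\Z_{\ge 0}$ whose correlation kernel $K_n$ is expressed, via the Christoffel--Darboux formula, in terms of the Meixner orthogonal polynomials with parameters depending on $(m,n,p)$. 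Consequently $\P(T_{(m,n)} \le k)$ can be written as a Fredholm determinant $\det(I - K_n)_{\ell^2(J_k)}$ over an explicit tail set $J_k$.

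The key technical step is the edge rescaling $x = n g(a,b) + \sigma(a,b)\, n^{1/3}\xi$. Using either Plancherel--Rotach-type asymptotics for the Meixner polynomials or, more systematically, the Deift--Zhou steepest-descent method applied to the associated Riemann--Hilbert problem, one proves that $\sigma(a,b)\, n^{1/3}\, K_n$ converges locally uniformly near the edge to the Airy kernel $K_{\mathrm{Airy}}$. The leading-order edge $g(a,b)$ is identified with the right endpoint of the equilibrium measure for the Meixner log-gas (obtained by solving a standard variational problem with logarithmic repulsion and external potential determined by the Meixner weight); as a by-product this recovers Theorem~\ref{Th:JockuschProppShor}. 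The constant $\sigma(a,b)$ is then extracted from the local quadratic behaviour of the effective potential at that endpoint, which is precisely a saddle-point calculation.

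Once the kernel convergence on compact sets is in hand, convergence of the Fredholm determinants on $(\xi,+\infty)$ follows from uniform-in-$n$ trace-class estimates (in particular an exponential decay bound for $K_n$ to the right of the edge, obtained from the classical tail bounds on Meixner polynomials outside the bulk). Combined with the identity $F_{TW}(\xi) = \det(I - K_{\mathrm{Airy}})_{L^2(\xi,+\infty)}$, this yields the claimed Tracy--Widom limit. The main obstacle is the steepest-descent analysis itself: producing the precise formula
\[
\sigma(a,b) = \frac{p^{1/6}}{1-p}\,(ab)^{-1/6}\bigl(\sqrt{a}+\sqrt{pb}\bigr)^{2/3}\bigl(\sqrt{b}+\sqrt{pa}\bigr)^{2/3}
\]
requires careful bookkeeping of all prefactors arising from the saddle points and from the matching of the local model problem at the right edge with the integral representation of $K_{\mathrm{Airy}}$. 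The remaining ingredients---the combinatorics of RSK, the determinantal structure, and the Fredholm determinant formalism---enter essentially as black boxes.
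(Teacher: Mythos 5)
The paper does not prove this statement at all: it is imported verbatim as Theorem 1.2 of Johansson's \emph{Shape fluctuations and random matrices} and used as a black box, so there is no internal proof to compare against. What you have written is an accurate reconstruction of Johansson's own argument: RSK with Greene's theorem identifies $T_{(m,n)}$ with the top row of a Schur measure specialised at $\sqrt{p}$, the shifted rows form the determinantal Meixner ensemble with a Christoffel--Darboux kernel, $\P(T_{(m,n)}\le k)$ becomes a Fredholm determinant, and the edge asymptotics of the Meixner kernel produce the Airy kernel and hence $F_{TW}$, with $g(a,b)$ read off from the right endpoint of the equilibrium measure and $\sigma(a,b)$ from the local expansion there. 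Two caveats. First, one small imprecision: the $n^{1/3}$ scale and the Airy kernel arise from a \emph{degenerate} (coalescing, cubic) saddle point --- the second derivative of the effective potential vanishes at the soft edge --- not from ``local quadratic behaviour''; a genuinely quadratic saddle would give Gaussian or bulk (sine-kernel) statistics. Second, as you acknowledge, everything analytically hard (uniform convergence of the rescaled kernel, the decay and trace-class estimates needed to pass from kernel convergence to convergence of Fredholm determinants, and the prefactor bookkeeping that yields the exact formula for $\sigma(a,b)$) is deferred, so this is a correct identification of the proof strategy of the cited reference rather than a self-contained proof. For the purposes of this paper that is exactly the right level: the authors themselves take the theorem as given.
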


In another paper Johansson \cite[Th.5.3]{Johansson} has also obtained Tracy-Widom fluctuations for longest increasing paths in the case ${\bf h}=(1,0)$. 
The authors of \cite{MajumdarNechaevEARMSA} state a close result for the fluctuations of $\Lr_{(an,bn)}$ around its mean (see also Section 4 in \cite{Prieehev}). However, we have not been able to fill the gap between their result and the convergence of rescaled fluctuations.

From Theorem \ref{Prop:FluctuationsJohansson}, it is not obvious to obtain a result as neat as Proposition \ref{Theoflu_continu} for every direction $(a,b)$. The proof of Proposiion \ref{Theoflu_continu} relies on the scaling invariance property of the Poisson process: the law of $L_{(x,t)}$ only depends on the value of $xt$. There is of course no analogous for fields of Bernoulli random points. However, one can still show that for any $a,b\ge 0$ and ${\bf h}$, the fluctuations of $\Lr^{\bf h}_{(\lf an\rf,\lf bn\rf)}$ are also of order $n^{1/3}$ (outside the flat edges of the limiting shape). Before stating our result about the fluctuations of $\Lr^{\bf h}_{(\lf an\rf,\lf bn\rf)}$, we must first prove a technical lemma.

\begin{lem}\label{lem:technique}
Let $\bf{h}$ be a gap constraint. 
For all $a,b> 0$ such that $g^{\bf h}(a,b)<\min\{a/h_1,b/h_2\}$, there exists a unique couple $(\alpha,\beta)$ of positive numbers such that $g^{\bf h}(a,b)=g(\alpha,\beta).$ Moreover, $(\alpha,\beta)$ is solution   to the system
$$
\begin{cases}
&\alpha+h_1g(\alpha,\beta)=a,\\
&\beta+h_2g(\alpha,\beta)=b.
\end{cases}
$$
\end{lem}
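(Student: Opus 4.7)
The plan is to exhibit the couple $(\alpha,\beta)$ explicitly from $g^{\bf h}(a,b)$ and then leverage the fixed-point characterization of Proposition~\ref{TheoLLN} to verify the claim. Set $c:=g^{\bf h}(a,b)$ and define
$$
\alpha := a - h_1 c, \qquad \beta := b - h_2 c.
$$
The hypothesis $c<\min\{a/h_1,b/h_2\}$ immediately yields $\alpha,\beta>0$. I would next check that this strict inequality places $(a,b)$ in the first (non-flat) regime of Proposition~\ref{TheoLLN}: in each of the two flat regimes one has $g^{\bf h}(a,b)=a/h_1$ or $b/h_2$, and a short calculation using $p\le 1$ (so that $h_1/h_2\ge h_1/(h_2-1+1/p)$ and symmetrically) shows that those values are precisely $\min\{a/h_1,b/h_2\}$. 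Hence the fixed-point equation \eqref{Systeme_xy} applies and rewrites as $c=g(\alpha,\beta)$. In particular, $(\alpha,\beta)$ satisfies the displayed system by construction, since $\alpha+h_1 g(\alpha,\beta)=\alpha+h_1 c=a$ and similarly $\beta+h_2 g(\alpha,\beta)=b$, establishing existence.

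For uniqueness, let $(\alpha',\beta')$ be any positive solution of the system and set $c':=g(\alpha',\beta')$. Plugging $\alpha'=a-h_1c'$ and $\beta'=b-h_2c'$ back into the definition of $c'$ shows that $c'$ satisfies the fixed-point equation
$$
c' = g(a-h_1c',\, b-h_2c').
$$
The explicit formula \eqref{eq:gTASEP} shows that $g$ is strictly increasing in each of its two arguments on $(0,+\infty)^2$, so the map $c\mapsto g(a-h_1c,b-h_2c)$ is strictly decreasing on $(0,\min\{a/h_1,b/h_2\})$. The fixed-point equation therefore has at most one solution in this range, which by Proposition~\ref{TheoLLN} is $g^{\bf h}(a,b)=c$. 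Hence $c'=c$ and $(\alpha',\beta')=(\alpha,\beta)$.

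The lemma is essentially a careful bookkeeping exercise around Proposition~\ref{TheoLLN}; the only mild subtlety, and the one step I would be most careful with, is verifying that the hypothesis $g^{\bf h}(a,b)<\min\{a/h_1,b/h_2\}$ corresponds exactly to the non-flat regime, so that \eqref{Systeme_xy} is actually available. Once this equivalence is settled, both existence and uniqueness follow from the strict monotonicity of $g$ that already underlies the proof of Proposition~\ref{TheoLLN}.
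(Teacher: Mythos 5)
Your proof is correct, but it takes a genuinely more direct route than the paper's for the existence half. The paper does not simply read off $(\alpha,\beta)=(a-h_1c,\,b-h_2c)$ from the fixed-point equation \eqref{Systeme_xy}; it first observes (much as you do for uniqueness) that any positive solution of the system forces $g(\alpha,\beta)=g^{\bf h}(a,b)$, and then establishes existence and uniqueness of the solution by a separate one-variable argument: using the homogeneity $g(\alpha,\beta)=\beta\, g(\alpha/\beta,1)$ it reduces the system to an equation for the slope $\gamma=\alpha/\beta$, namely $g(\gamma,1)=(a-b\gamma)/(bh_1-ah_2)$, whose left side increases and right side decreases in $\gamma$, and it translates the hypothesis $g^{\bf h}(a,b)<a/h_1$ into the condition $g(0,1)<a/(bh_1-ah_2)$ that guarantees a crossing. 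You instead get existence for free from the fixed-point characterization in Proposition~\ref{TheoLLN} (after checking that the hypothesis excludes the two flat regimes, which is exactly the bookkeeping you flag and which does check out since $1/p\ge 1$), and you run uniqueness in the variable $c=g(\alpha,\beta)$ via the strict monotonicity of $c\mapsto g(a-h_1c,b-h_2c)$ rather than in the slope $\gamma$. Both arguments are sound; yours is shorter but leans entirely on Proposition~\ref{TheoLLN}, whereas the paper's homogeneity reduction re-derives the relevant one-dimensional monotonicity independently. One small point worth making explicit in your write-up: the strict decrease of $c\mapsto g(a-h_1c,b-h_2c)$ needs at least one of $h_1,h_2$ to be positive, which holds because ${\bf h}\neq(0,0)$ in the discrete setting.
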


\begin{proof}By symmetry, we can assume that $g^{\bf h}(a,b)<a/h_1\le b/h_2$.
Recall that
 in this case, $g^{\bf h}(a,b)$ is the unique $\lambda$ solution of 
 \begin{equation}\label{eq:lemtech}
\lambda= g\left(a-h_1\lambda,b-h_2\lambda \right),
\end{equation}
and we necessarily have  
$ g\left(0,b-\frac{h_2a}{h_1}\right)<\frac{a}{h_1}$.

Assume that there exists a solution $(\alpha,\beta)$ of the system
$$\alpha+h_1g(\alpha,\beta)=a,\qquad \beta+h_2g(\alpha,\beta)=b.$$
Putting $g(\alpha,\beta)$ in \eqref{eq:lemtech}, we see that $g(\alpha,\beta)$ satisfies this equality and thus $g(\alpha,\beta)=g^{\bf h}(a,b)<a/h_1$. In particular, we necessarily have $\alpha, \beta>0$. It remains to prove that the system as indeed a (unique) solution.
Noticing that $g(\alpha,\beta)=\beta g(\alpha/\beta,1)$ and setting $\gamma=\alpha/\beta$, we see now that the system is equivalent to 
\begin{equation}\label{Eq:systemred}
\beta(\gamma+h_1g(\gamma,1))=a,\qquad \beta(1+h_2g(\gamma,1))=b.
\end{equation}
In particular, we have 
$$b(\gamma+h_1g(\gamma,1))=a(1+h_2g(\gamma,1)).$$
Hence, if $a/h_1=b/h_2$, we get $\gamma=a/b$.
In the other case : $a/h_1< b/h_2$, we get
$$g(\gamma,1)=\frac{a-b\gamma}{bh_1-ah_2}.$$
The left hand side is increasing with $\gamma$ whereas the right hand side decreases. Thus, there exists a unique solution $\gamma>0$ if and only if
$$g(0,1)<\frac{a}{bh_1-ah_2}$$
which coincides with the  condition $ g\left(0,b-\frac{h_2a}{h_1}\right)<\frac{a}{h_1}$ stated above. Finally, using \eqref{Eq:systemred}, we see that
the existence and unicity of $\gamma$ implies the existence and unicity of $(\alpha,\beta)$.
\end{proof}

\begin{prop}\label{Prop:FluctuationsDiscretes}
Let ${\bf h}=(h_1,h_2)$ be a gap constraint and let
 $a,b> 0$ be such that $g^{\bf h}(a,b)<\min\{a/h_1,b/h_2\}$. Let us define $(\alpha,\beta)$ as in Lemma \ref{lem:technique} such that  $g^{\bf h}(a,b)=g(\alpha,\beta)$. Set 
$$\mathcal{W}^{\bf h}_{an,bn}=\frac{\Lr^{\bf h}_{(\lf an\rf,\lf bn\rf)}- g^{\bf h}(a,b)n}{ \sigma(\alpha,\beta)n^{1/3}}.$$
 Assuming for example that $a/h_1\le b/h_2$,
we have, for all $x\ge 0$, 
\begin{equation*}
F_{TW}\left(\frac{bx}{\beta}\right)\le \liminf_{n\to \infty} \mathbb{P}\left(\mathcal{W}^{\bf h}_{an,bn}\le x\right)\le \limsup_{n\to \infty}\mathbb{P}\left(\mathcal{W}^{\bf h}_{an,bn}\le x\right) \le F_{TW}\left(\frac{ax}{\alpha}\right)
\end{equation*}
\begin{equation*}
F_{TW}\left(\frac{-ax}{\alpha}\right)\le \liminf_{n\to \infty} \mathbb{P}\left(\mathcal{W}^{\bf h}_{an,bn}\le -x\right)\le \limsup_{n\to \infty}\mathbb{P}\left(\mathcal{W}^{\bf h}_{an,bn}\le -x\right) \le F_{TW}\left(\frac{-bx}{\beta}\right).
\end{equation*}
\end{prop}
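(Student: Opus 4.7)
The plan is to combine the exact coupling of Theorem \ref{th:DiscreteCoupling} with Johansson's cube-root fluctuations (Theorem \ref{Prop:FluctuationsJohansson}), using monotonicity of $T$ to sandwich the target rectangle between two rectangles whose corners sit on the ray through $(\alpha,\beta)$, along which Johansson's result applies directly.

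I fix $x \ge 0$ and set $k = \lfloor g^{\bf h}(a,b) n + x\sigma(\alpha,\beta) n^{1/3} \rfloor$. Theorem \ref{th:DiscreteCoupling} rewrites
$$\mathbb{P}(\mathcal{W}^{\bf h}_{an,bn} \le x) = \mathbb{P}(T_{(m',n')} \le k), \qquad m' := \lfloor an\rfloor - h_1 k,\ \ n' := \lfloor bn\rfloor - h_2 k.$$
Using the identities $a = \alpha + h_1 g(\alpha,\beta)$, $b = \beta + h_2 g(\alpha,\beta)$, $g^{\bf h}(a,b) = g(\alpha,\beta)$ from Lemma \ref{lem:technique}, these become $m' = \alpha n - h_1 x\sigma(\alpha,\beta)n^{1/3} + O(1)$ and $n' = \beta n - h_2 x\sigma(\alpha,\beta)n^{1/3} + O(1)$.

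Next, the hypothesis $a/h_1 \le b/h_2$ combined with Lemma \ref{lem:technique} forces $h_1\beta \ge h_2\alpha$ (subtracting the two identities gives $h_1\beta - h_2\alpha = h_1 b - h_2 a \ge 0$), hence $m'/\alpha \le n'/\beta$. Setting $N_- := m'/\alpha$ and $N_+ := n'/\beta$, one has $(\lfloor\alpha N_-\rfloor, \lfloor\beta N_-\rfloor) \le (m',n') \le (\lceil\alpha N_+\rceil, \lceil\beta N_+\rceil)$ coordinatewise, so monotonicity of $T$ yields the sandwich
$$\mathbb{P}\bigl(T_{(\lceil\alpha N_+\rceil,\lceil\beta N_+\rceil)} \le k\bigr) \le \mathbb{P}(T_{(m',n')} \le k) \le \mathbb{P}\bigl(T_{(\lfloor\alpha N_-\rfloor,\lfloor\beta N_-\rfloor)} \le k\bigr).$$
To apply Theorem \ref{Prop:FluctuationsJohansson} along the ray $(\alpha,\beta)$ at scales $N_\pm$, I compute
$$\frac{k - g(\alpha,\beta) N_-}{\sigma(\alpha,\beta) N_-^{1/3}} = \frac{x\sigma(\alpha,\beta) n^{1/3}\cdot(\alpha + h_1 g(\alpha,\beta))/\alpha + O(1)}{\sigma(\alpha,\beta) N_-^{1/3}} \xrightarrow[n\to\infty]{} \frac{xa}{\alpha},$$
and analogously the deviation at scale $N_+$ tends to $xb/\beta$ (using $N_\pm/n \to 1$). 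Johansson's theorem then gives the first pair of inequalities. The case $-x$ is identical except that $m'/\alpha \ge n'/\beta$ now holds, swapping the roles of $N_\pm$ and producing the second pair of bounds.

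The only point requiring care is the sandwich step: one must verify that the $O(1)$ discrepancies from the floor in $k$ and from replacing $(\alpha N_\pm, \beta N_\pm)$ by integer points contribute only $o(n^{1/3})$ after the Tracy--Widom rescaling by $\sigma(\alpha,\beta) n^{1/3}$, but this is automatic from the continuity of $F_{TW}$.
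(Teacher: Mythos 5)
Your proposal is correct and follows essentially the same route as the paper: the coupling identity of Theorem \ref{th:DiscreteCoupling}, the pair $(\alpha,\beta)$ from Lemma \ref{lem:technique}, Johansson's Theorem \ref{Prop:FluctuationsJohansson} along the ray through $(\alpha,\beta)$, and a monotonicity sandwich to absorb the off-ray displacement of order $n^{1/3}$. The only difference is organizational: you start from $\Lr^{\bf h}_{(\lf an\rf,\lf bn\rf)}$ and sandwich the image point $(m',n')$ between two points of the ray through $(\alpha,\beta)$ using monotonicity of $T$, whereas the paper starts from $T_{(\alpha n,\beta n)}$ and uses monotonicity of $\Lr^{\bf h}$ to compare $(aN,b\gamma(N)N)$ with $(aN,bN)$ --- a mirror image of the same argument producing the same constants $a/\alpha$ and $b/\beta$.
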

In one particular direction the LHS and RHS of above inequalities coincide:
\begin{coro}
Let ${\bf h}=(h_1,h_2)$ be a gap constraint and let $a,b> 0$ be such that $a/h_1= b/h_2$. Then 
\begin{equation}\label{eq:Flu_dis}
\lim_{n\to \infty}\P\left(\frac{\Lr^{\bf h}_{(an,bn)} -g^{{\bf h}}(a,b)n}{\sigma^{\bf h}(a,b)n^{1/3}}\le x\right)=F_{TW}(x),
\end{equation}
where 
$$\sigma^{\bf h}(a,b)= \sigma(\alpha,\beta)\sqrt{\frac{\alpha\beta}{ab}}.$$
\end{coro}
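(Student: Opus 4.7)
The plan is to derive the corollary directly from Proposition \ref{Prop:FluctuationsDiscretes} by verifying that the two bounds in the sandwich collapse to a single value precisely when $a/h_1 = b/h_2$, and then performing a linear rescaling to recover the stated form with $\sigma^{\bf h}(a,b)$.

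First I would revisit the proof of Lemma \ref{lem:technique} in the special direction $a/h_1 = b/h_2$. There the auxiliary equation $b(\gamma+h_1 g(\gamma,1)) = a(1+h_2 g(\gamma,1))$ reduces to $\gamma = a/b$, so the associated couple $(\alpha,\beta)$ satisfies $\alpha/\beta = a/b$, i.e.\ $\alpha/a = \beta/b$. Call this common ratio $\rho$; then $\rho = \sqrt{(\alpha/a)(\beta/b)} = \sqrt{\alpha\beta/(ab)}$, which already matches the shape of the normalizing constant $\sigma^{\bf h}(a,b)=\sigma(\alpha,\beta)\sqrt{\alpha\beta/(ab)}$ claimed in the corollary.

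Next, apply Proposition \ref{Prop:FluctuationsDiscretes}. For $x \ge 0$ the proposition gives
\begin{equation*}
F_{TW}\!\left(\tfrac{bx}{\beta}\right) \le \liminf_{n\to\infty}\P(\mathcal{W}^{\bf h}_{an,bn}\le x) \le \limsup_{n\to\infty}\P(\mathcal{W}^{\bf h}_{an,bn}\le x) \le F_{TW}\!\left(\tfrac{ax}{\alpha}\right),
\end{equation*}
but by the previous paragraph $bx/\beta = ax/\alpha = x/\rho$, so both ends coincide and $\P(\mathcal{W}^{\bf h}_{an,bn}\le x) \to F_{TW}(x/\rho)$. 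The same argument (using the other displayed inequality in Proposition \ref{Prop:FluctuationsDiscretes}) handles $x < 0$.

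Finally I would translate this limit into the normalization of the corollary. Writing
\begin{equation*}
\frac{\Lr^{\bf h}_{(\lfloor an\rfloor,\lfloor bn\rfloor)} - g^{\bf h}(a,b)n}{\sigma^{\bf h}(a,b)\,n^{1/3}} \le y
\quad\Longleftrightarrow\quad
\mathcal{W}^{\bf h}_{an,bn} \le y\,\frac{\sigma^{\bf h}(a,b)}{\sigma(\alpha,\beta)} = y\rho,
\end{equation*}
(since $\sigma^{\bf h}(a,b)/\sigma(\alpha,\beta) = \rho$), the limit from the preceding step becomes $F_{TW}(y\rho/\rho) = F_{TW}(y)$, which is exactly \eqref{eq:Flu_dis}. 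I do not anticipate a real obstacle beyond this bookkeeping, because all of the probabilistic content is already encoded in Proposition \ref{Prop:FluctuationsDiscretes} and Lemma \ref{lem:technique}; the only point that requires care is the identification $\alpha/a = \beta/b$ in the special direction, which is what makes the two Tracy–Widom bounds match.
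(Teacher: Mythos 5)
Your proposal is correct and follows exactly the paper's route: the paper derives the corollary in the last line of the proof of Proposition \ref{Prop:FluctuationsDiscretes} by noting that $a/h_1=b/h_2$ forces $\alpha/a=\beta/b$, so the two Tracy--Widom bounds coincide, and the rescaling by $\rho=\sqrt{\alpha\beta/(ab)}$ that you spell out is the same bookkeeping left implicit there. Your explicit verification via Lemma \ref{lem:technique} that $\gamma=a/b$ in this direction is a welcome filling-in of that detail, but it is not a different argument.
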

Again, we have a simple expression for ${\bf h}=(h,h)$ and $a=b=1$:
$$
\sigma^{(h,h)}(1,1)= \frac{(1-p)^{1/3}p^{1/6}}{(1+(2h-1)\sqrt{p})^{4/3}}.
$$

\begin{remark}Proposition \ref{Prop:FluctuationsDiscretes} states that, ouside the flat edge of the limiting shape, the fluctuations of $\Lr^{\bf h}_{(an,bn)}$ are of order $n^{1/3}$. Inside the flat edge (except in the critical direction), one can easily show that $\P( |\Lr^{\bf h}_{(\lf an\rf,\lf bn\rf)}- g^{\bf h}(a,b)n|\ge 1)$ tends to 0.
\end{remark}

\begin{proof}
A change of indexes in Theorem \ref{th:DiscreteCoupling} yields
\begin{equation}\label{Eq:coupling0}
\mathbb{P}( T_{(m,n)}\le k)=  \mathbb{P}( \Lr^{\bf h}_{(m+h_1 k,n+h_2 k)}\le k).
\end{equation}
Let $a,b>0$ such that $g^{\bf h}(a,b)<\min\{a/h_1,b/h_2\}$ and, according to Lemma \ref{lem:technique}, take $\alpha,\beta>0$ solution of the system
$$\alpha+h_1g(\alpha,\beta)=a,\qquad \beta+h_1g(\alpha,\beta)=b$$
and such that $g^{\bf h}(a,b)=g(\alpha,\beta)=:\lambda.$ Using \eqref{Eq:coupling0},  we obtain
\begin{eqnarray}\label{Eq:fluctu}
\nonumber
\mathbb{P}( T_{(\alpha n,\beta n)}\le \lambda n+y n^{1/3})&=& \mathbb{P}( \Lr^{\bf h}_{(n(\alpha+h_1\lambda)+h_1 y n^{1/3},n(\beta+h_2\lambda)+h_2 y n^{1/3})}\le \lambda n+y n^{1/3}).\\ 
&=& \mathbb{P}( \Lr^{\bf h}_{(an+h_1y n^{1/3},bn+h_2y n^{1/3})}\le \lambda n+y n^{1/3}).
\end{eqnarray}
Assume now that $a/h_1\le b/h_2$.
Set  
$$N:=n+\frac{h_1}{a}y n^{1/3} \mbox{ so that }n=N -\frac{h_1}{a}yN^{1/3}+o(N^{1/3})$$ 
and define the function $\gamma$ such that 
$$
N\gamma(N)=n+\frac{h_2}{b}y n^{1/3}, 
$$
observe that because of $a/h_1\le b/h_2$ we have
\begin{equation}\label{eq:DefinitionEtMajorationGamma}
N\gamma(N)\leq n+\frac{h_1}{a}y n^{1/3} =N.
\end{equation}
With this notation, \eqref{Eq:fluctu} becomes
$$
\mathbb{P}( \Lr^{{\bf h}}_{(aN,b\gamma(N)N)}\le \lambda N+y(1-\frac{h_1\lambda}{a}) N^{1/3}+o(N^{1/3}))= \mathbb{P}( T_{(\alpha n, \beta n)}\le\lambda n+y n^{1/3}).
$$
Let us notice that 
$$1-\frac{h_1\lambda}{a}=1-\frac{a-\alpha}{a}=\frac{\alpha}{a}>0.$$ Using \eqref{eq:DefinitionEtMajorationGamma} 
we have that $\Lr^{{\bf h}}_{(aN,bN\gamma(N))} \le \Lr^{{\bf h}}_{(aN,bN)}$ and therefore by putting $x=y\alpha/a$ we have for any $x\ge 0$, 
\begin{align*}
\mathbb{P}( \Lr^{{\bf h}}_{(aN,bN)}\le \lambda N+x N^{1/3}+o(N^{1/3}))&\leq \mathbb{P}( \Lr^{{\bf h}}_{(aN,bN\gamma(N))}\le \lambda N+x N^{1/3}+o(N^{1/3}))\\
&\leq  \mathbb{P}( T_{(\alpha n, \beta n)}\le\lambda n+\frac{xa}{\alpha} n^{1/3}).
\end{align*}
Using Johansson's result, we obtain, for $x\ge 0$,
$$\limsup_{N\to \infty}
\mathbb{P}( \mathcal{W}^{{\bf h}}_{(a N,b N)}\le  x )\le \lim_{n\to \infty}\mathbb{P}( T_{(\alpha n, \beta n)}\le \lambda n+x\sigma(\alpha,\beta)\frac{a}{\alpha}  n^{1/3})= F_{TW}(\frac{xa}{\alpha}).
$$
We obtain the lower bound in the same way, setting 
$$\tilde{N}:=n+\frac{h_2}{b}y n^{1/3}$$ 
and  $\tilde{\gamma}$ the function such that 
$$\tilde{N}\tilde{\gamma}(\tilde{N})=n+\frac{h_1}{a}y n^{1/3} .$$
Due to the condition $a/h_1\le b/h_2$, we now have $\tilde{\gamma}(\tilde{N})\ge 1$ for any $y\ge 0$.
The case $x\le 0$ is also obtained with similar arguments.

Finally, note that in the particular case $a/h_1= b/h_2$, we have $\alpha/a=\beta/b$ and we obtain the corollary.
\end{proof}

\noindent {\bf Aknowledgements.} 
The authors are glad to acknowledge N.Georgiou for helpful comments regarding the literature around the problem of fluctuations. 


%
%

\end{document}